\newtheorem{thm}{Theorem}[section]
\newtheorem{signHausthm}[thm]{Signed Hausdorff's Theorem}
\newtheorem{richthm}[thm]{Richter's Theorem}
\newtheorem{cor}[thm]{Corollary}
\theoremstyle{definition}
\newtheorem{dfn}[thm]{Definition}
\newtheorem{exm}[thm]{Example}
\theoremstyle{remark}
\newtheorem{rem}[thm]{Remark}
\newcommand{\exmsymbol}{\hfill$\circ$}
\newcommand{\cset}{\mathds{C}}
\newcommand{\nset}{\mathds{N}}
\newcommand{\rset}{\mathds{R}}
\newcommand{\diff}{\mathrm{d}}
\newcommand{\lin}{\mathrm{lin}\,}
\newcommand{\supp}{\mathrm{supp}\,}
\newcommand{\one}{\mathds{1}}
\newcommand{\cat}{\mathcal{C}}
\newcommand{\cF}{\mathcal{F}}
\newcommand{\cS}{\mathcal{S}}
\newcommand{\cX}{\mathcal{X}}
\newcommand{\cV}{\mathcal{V}}
\newcommand{\cY}{\mathcal{Y}}
\newcommand{\fA}{\mathfrak{A}}
\author{Philipp J.\ di Dio}
\address{Department of Mathematics and Statistics, University of Konstanz, Universit\"atsstra{\ss}e 10, D-78464 Konstanz, Germany}
\address{Zukunftskolleg, Universtity of Konstanz, Universit\"atsstra{\ss}e 10, D-78464 Konstanz, Germany}
\address{philipp.didio@uni-konstanz.de}
\journal{arXiv}
\title{Absolutely Continuous Representing Measures of Moment Sequences with an Emphasis on the Regularity of the Densities}
\begin{document}

\begin{abstract}
In this work we investigate and characterize linear functionals $L:\rset[x_1,\dots,x_n]\to\rset$ with absolutely continuous representing measures $\mu$, i.e., $\diff\mu(x) = g(x)\,\diff x$ for some density $g$.
We focus on the regularity of $g$ and how it is determined and influenced by the moments $s_\alpha = L(x^\alpha)$.
\end{abstract}

\begin{keyword}
moment sequence\sep moment\sep representing measure\sep absolutely continuous\sep signed
\MSC[2020] Primary 44A60; Secondary 30E05, 26C05.
\end{keyword}

\maketitle

\tableofcontents

\noindent\rule{\textwidth}{1pt}

\section{Introduction}

Let $\mu$ be a measure on $\rset^n$ with $n\in\nset$.
For $\alpha = (\alpha_1,\dots,\alpha_n)\in\nset_0^n$ we call the number
\[s_\alpha := \int_{\rset^n} x^\alpha~\diff\mu(x)\]
with $x^\alpha := x_1^{\alpha_1}\cdots x_n^{\alpha_n}$ the \emph{$\alpha$-moment} of $\mu$ if it exists.
The \emph{$K$-moment problem} is: Given finitely or infinitely many real numbers $s_\alpha$, $\alpha\in I\subseteq\nset_0^n$, and a closed set $K\subseteq\rset^n$.
Does there exist a measure $\mu$ on $K$ such that
\[s_\alpha = \int_K x^\alpha~\diff\mu(x)\]
holds for all $\alpha\in I$?

For classical results on the moment problem see e.g.\ 
\cite{achieser56,ahiezer62,akhiezClassical,berg91,blekhe20,didio17w+v+,
didio18gaussian,didio21HilbertFunction,didio23gaussian,fialkoMomProbSurv,hamburger20,
hausdo21a,hausdo21,hausdo23,havila35,havila36,karlinStuddenTSystemsBook,kemper68,
kemper87,krein70,kreinMarkovMomentProblem,landauMomAMSProc,landau80,lauren09,
marshallPosPoly,schmudMomentBook,stielt94,simon98,strassen65} to name only a few references to demonstrate the rich and long history as well as the variety of applied techniques to this problem.

In applications moments and moment problems appear frequently, e.g.\
in financial mathematics \cite{anast06,stoyan16},
in (algebraic) statistics \cite{pearson94,titter85,martin05,amendo18,didio18gaussian},
in optimization \cite{lasserreSemiAlgOpt},
in shape reconstruction \cite{balins61,manas68,mathei80,lee82,milanf95,golub99,becker07,
sommer07,gravin12,gravin14,gravin18},
and in pattern recognition \cite{hu62,dai92,chen93,sommer07,ammari19}.
Especially for shape reconstruction and pattern recognition the concept of derivatives of moments and moment functionals was developed \cite{didio23gaussian}, see also \cite{lasserre08,henrion14,brehard19,marx20}.
In these applications moments of absolutely continuous measures are the main focus, i.e., measures $\mu$ of the form $\diff\mu(x) = g(x)\,\diff x$ for some density function $g$.
In \cite{ambros14} $L^1$-densities were treated.
In the current work we extend this work and look especially at the regularity of the density $g$.
We are here also facing the problem of signed representing measures, see e.g.\
\cite{borel95,polya38,boas39a,bloom53,sherman64,horn77,berg79,kowalski84,duran89,
hoischen92,schmud24signedArxiv,didio24tsystemshomepageArxiv}.

The paper is structured as follows.
In \Cref{sec:truncated} we treat the setup of \cite{ambros14} in full generality, i.e., truncated moment functionals and sequences ($I$ is finite resp.\ finite dimensional vector spaces $\cV$ of measurable functions) on general measurable spaces $(\cX,\fA)$.
In \Cref{thm:main} we give the main theorem which is based on/appeared in \cite{didio18gaussian}.
Its proof is significantly easier than the more special version in \cite{ambros14} since we use here the concept of Dirac approximating families (\Cref{dfn:dirac}) and \Cref{thm:richter} \cite{richte57}.
In \Cref{exm:discontinuous} we give an example of an inner moment functional which can not be represented by an absolutely continuous representing measure.
For the rest of \Cref{sec:truncated} we discuss and show how from \Cref{thm:main} the results in \cite{ambros14} follow and how \Cref{thm:main} also gives absolutely continuous representing measures for boundary moment functionals and on general measurable spaces $\cX$.
In \Cref{sec:full01} we then treat linear functionals $L:\rset[x_1,\dots,x_n]\to\rset$ such that the representing measures have a density $g$ with $\supp g \subseteq [0,1]^n$.
For that we employ the technique of derivatives of moments \cite{didio23gaussian} and the \Cref{thm:signedHaus} \cite{hausdo23}.
We characterize linear functionals $L$ having an absolutely continuous (signed) representing measure on $[0,1]^n$ and additionally also characterize them with given regularity (differentiability).
In \Cref{sec:fullR} we then treat linear functionals $L:\rset[x_1,\dots,x_n]\to\rset$ without any support restrictions.
Because of the flexibility of signed representing measures on all $\rset^n$ we have to restrict the treatment to sequences $s = (s_\alpha)_{\alpha\in\nset_0^n}$ such that
\[f(z) := \sum_{\alpha\in\nset_0^n} \frac{i^{|\alpha|}\cdot s_\alpha}{\alpha!}\cdot z^\alpha \tag{$z\in\rset^n$}\]
is entire.
The function $f$ turns out to be the characteristic function $\hat\mu$ of the representing measure $\mu$ and (inverse) Fourier transformation, which is well-known and used e.g.\ in signal processing, provides us with the density $g$ and hence characterizations of (signed) absolutely continuous representing measures.
In \Cref{sec:summary} we summarize the main results.

\section{Absolutely Continuous Representing Measures of Truncated Moment Functionals}
\label{sec:truncated}

\subsection{Main Result}

To formulate and prove the main results in this section about truncated moment functionals we need the following concept.

\begin{dfn}[{\cite[p.\ 3022, general setting (c)]{didio18gaussian}}]\label{dfn:dirac}
Let $(\cX,\fA)$ be a measurable space and let $\cV$ be a vector space of measurable functions $f:\cX\to\rset$. 
A family $\cF = \{\delta_{\sigma,x} \,|\, x\in\cX,\ \sigma\in\Lambda\}$, $\Lambda$ a set, of measures $\delta_{\sigma,x}$ on $\cX$ such that for any $x\in\cX$ there exists a sequence $(\sigma_{x,i})_{i\in\nset_0}\subseteq\Lambda$ with
\begin{equation}\label{eq:approx}
\lim_{i\to\infty} \int_\cX f(y)~\diff\delta_{\sigma_{x,i},x}(y) = f(x)
\end{equation}
for all $f\in\cV$ is called a \emph{Dirac-approximating family for $\cV$}.
\end{dfn}

Our guiding example of a Dirac-approximating family is the following which is also absolutely continuous with respect to the Lebesgue measure $\lambda$.

\begin{exm}\label{exm:gaussian}
Let $\Lambda = (0,1)$, let $\lambda$ be the Lebesgue measure on $\rset$, and let
$g_{\sigma,x}(y) := \frac{1}{\sqrt{2\pi \sigma^2}}\cdot e^{-\frac{(x-y)^2}{2\sigma^2}}$
be the Gaussian distribution centered at $x\in\rset$ with variance $\sigma\in\Lambda$.
Then we have
\[\lim_{\sigma\searrow 0} \int_\rset f(y)\cdot g_{\sigma,x}(y)~\diff\lambda(y) = f(x)\]
for all $f\in C_b(\rset,\rset)$ bounded continuous functions.
Therefore, $\{\delta_{\sigma,x} \,|\, \sigma\in (0,1),\ x\in\rset\}$ with $\delta_{\sigma,x}$ defined by
$\diff\delta_{\sigma,x}(y) := g_{\sigma,x}(y)\,\diff\lambda(y)$
is a Dirac-approximating family for $C_b(\rset,\rset)$.
\exmsymbol
\end{exm}

We now show that condition (\ref{eq:approx}) for Dirac-approximating families is crucial in the existence in absolutely continuous representing measures.

\begin{exm}\label{exm:discontinuous}
Let $\cX = [0,2]$, let $\cV = \lin\{f_0,f_1,f_2\}$ with $f_0(x)=1$, $f_1(x) =x$, and
\[f_2(x) := \begin{cases}
x^2 & \text{for}\ x\in [0,2]\setminus\{1\}\ \text{and}\\
-1 & \text{for}\ x=1.
\end{cases}\]
Then
\[\lim_{\sigma\searrow 0} \int_0^2 f_2(y)\cdot g_{\sigma,1}(y)~\diff\lambda(y) = 1 \neq -1 = f_2(1).\]
In fact, $\cV$ has no Dirac-approximation family which is absolutely continuous with respect to the Lebesgue measure $\lambda$ on $[0,2]$.
Hence, the linear functional $L:\cV\to\rset$ represented by $\lambda + \frac{8}{3}\delta_1$ is in the interior of the moment cone but can not be represented by an absolutely continuous measure with respect to $\lambda$.
To see this assume $\rho(x)\,\diff\lambda(x)$ with $\rho\in C([0,2],\rset)$ represents $L$.
Then
\[\int_0^2 f_2(x)\cdot \rho(x)~\diff\lambda(x) = L(f_2) = 0\]
implies $\supp\rho\subseteq\{0\}$ which implies $L(1) = 0$.
This is a contradiction since $L(1) >0$.
\exmsymbol
\end{exm}

We have seen that if (\ref{eq:approx}) does not hold with a family of absolutely continuous measures then we can construct a moment functional in the interior of the moment cone which has no absolutely continuous representing measure with respect to the Lebesgue measure.

We now prove that if there is at least one Dirac-approximating family of absolutely continuous measures with respect to the Lebesgue measure then any moment functional in the interior of the moment cone has an absolutely continuous representing measure.
To prove this result we need the following fundamental result due to H.\ Richter.

\begin{richthm}[{\cite[Satz 4]{richte57}}]\label{thm:richter}
Let $n\in\nset$, let $(\cX,\fA)$ be a measurable space, let $\cV$ be a $n$-dimensional real vector space of measurable functions $f:\cX\to\rset$, and let $L:\cV\to\rset$ be a linear functional given by some measure $\mu$ on $\cX$, i.e., $L(f) = \int_\cX f(x)~\diff\mu(x)$ for all $f\in\cV$.
Then there exist a $k\in\nset_0$ with $k\leq n$, points $x_1,\dots,x_k\in\cX$, and coefficients $c_1,\dots,c_k>0$ such that
\[L(f) = \sum_{i=1}^k c_i\cdot f(x_i)\]
holds for all $f\in\cV$.
\end{richthm}

For more on the history of Richter's Theorem see e.g.\ \cite{didioCone22}.
From Richter's Theorem we gain the following, which was first proven in \cite{didio18gaussian}.
We include the proof here to demonstrate how easy the existence of absolutely continuous representing measures follows from a Dirac-approximating family and Richter's Theorem.

\begin{thm}[{\cite[Thm.\ 17]{didio18gaussian}}]\label{thm:main}
Let $n\in\nset$,
let $(\cX,\fA)$ be a measurable space,
let $\cV$ be a $n$-dimensional vector space of measurable functions $f:\cX\to\rset$,
let $\cF=\{\delta_{\sigma,x} \,|\, \sigma\in\Lambda,\ x\in\cX\}$ be a Dirac-approximating family,
and let $L$ be a linear functional on $\cV$ from the interior of the moment cone.
Then there exist a $k\in\nset$ with $k\leq n$,
points $x_1,\dots,x_k\in\cX$, coefficients $c_1,\dots,c_k>0$, and parameters $\sigma_1,\dots,\sigma_k\in\Lambda$ such that
\[L(f) = \sum_{i=1}^k c_i\cdot\delta_{\sigma_i,x_i}\]
holds for all $f\in\cV$.
\end{thm}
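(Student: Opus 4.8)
The plan is to reduce \Cref{thm:main} to \Cref{thm:richter} by enriching the vector space $\cV$ with finitely many extra functions coming from the Dirac-approximating family, so that Richter's atomic representation of the enlarged functional is automatically built out of the measures $\delta_{\sigma,x}$. First I would recall that, since $L$ lies in the interior of the moment cone, it is represented by \emph{some} measure $\mu$ on $\cX$ (indeed one can choose $\mu$ to be a finite atomic measure, but we only need existence). The idea is then to perturb this representation: for each atom (or for a suitable representing measure) replace the point evaluation at $x$ by a nearby $\delta_{\sigma,x}$ with $\sigma$ small, using condition \eqref{eq:approx} to control the error on $\cV$.

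Concretely, I would proceed as follows. Fix a basis $f_1,\dots,f_n$ of $\cV$ and let $S\subseteq\rset^n$ be the moment cone, i.e.\ the set of vectors $(\nu(f_1),\dots,\nu(f_n))$ as $\nu$ ranges over measures on $\cX$; by hypothesis the vector $\ell := (L(f_1),\dots,L(f_n))$ lies in $\inter S$. By Richter's Theorem (\Cref{thm:richter}) applied to any representing measure, $S$ is spanned as a cone by the curve $\{v(x) := (f_1(x),\dots,f_n(x)) : x\in\cX\}$, so $\inter S$ is contained in the interior of the convex cone generated by that curve; hence there exist points $y_1,\dots,y_m\in\cX$ (with $m\le n+1$, by Carathéodory for cones) and weights $\lambda_j>0$ such that $\ell = \sum_j \lambda_j v(y_j)$ lies in the relative interior of the simplex-cone spanned by $v(y_1),\dots,v(y_m)$. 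Now for each $j$ pick the sequence $(\sigma_{y_j,i})_i$ from \Cref{dfn:dirac}, and set $v_i(y_j) := \bigl(\int f_1\,\diff\delta_{\sigma_{y_j,i},y_j},\dots,\int f_n\,\diff\delta_{\sigma_{y_j,i},y_j}\bigr)$. By \eqref{eq:approx}, $v_i(y_j)\to v(y_j)$ as $i\to\infty$. Since $\ell$ lies in the \emph{open} cone generated by the $v(y_j)$, for $i$ large enough $\ell$ still lies in the open cone generated by the perturbed vectors $v_i(y_j)$, so there are coefficients $c_j^{(i)}>0$ with $\ell = \sum_j c_j^{(i)} v_i(y_j)$. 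Translating back, $L(f) = \sum_j c_j^{(i)} \int_\cX f\,\diff\delta_{\sigma_{y_j,i},y_j}$ for all $f\in\cV$. Finally discard any zero coefficients and relabel to get $k\le m \le n+1$ terms; a small additional argument (again using that $\ell$ is interior, and that at most $n$ linearly independent directions are needed) trims this to $k\le n$.

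An alternative, slicker route avoids Carathéodory bookkeeping: apply \Cref{thm:richter} directly to the functional $L$ restricted to $\cV$ to get $L(f) = \sum_{i=1}^k c_i f(x_i)$ with $k\le n$, $c_i>0$; then perturb \emph{this} representation. For each $i$, replace $f(x_i)$ by $\int f\,\diff\delta_{\sigma_i,x_i}$; the resulting functional $L_{\sigma}(f) := \sum_i c_i \int f\,\diff\delta_{\sigma_i,x_i}$ converges to $L$ in the (finite-dimensional) dual of $\cV$ as all $\sigma_i\to 0$ along the approximating sequences. Since $L$ is in the interior of the moment cone, every functional in a small neighborhood is also represented by a measure of the required form, and one checks $L$ itself equals such an $L_{\sigma}$ by a continuity/open-mapping argument on the finitely many weights, or by re-solving the linear system $L(f_j) = \sum_i c_i \int f_j\,\diff\delta_{\sigma_i,x_i}$ for the $c_i$ with all data close to an invertible configuration.

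The main obstacle is the last step: perturbing the representation and then \emph{recovering exact equality} $L = \sum_i c_i \delta_{\sigma_i,x_i}$ rather than mere approximate equality. The clean way to handle this is precisely the interior hypothesis: because $\ell\in\inter S$, it lies in the open cone on $v(x_1),\dots,v(x_k)$, and openness is preserved under small perturbations of the generating vectors, so exact conic combinations with strictly positive coefficients persist. I would make this rigorous by noting that the map $(\sigma_1,\dots,\sigma_k)\mapsto$ (cone generated by $v_i(x_1),\dots,v_i(x_k)$) varies continuously and $\ell$ is in the interior of the limit cone; hence $\ell$ is in the perturbed cone for large $i$, yielding the desired positive coefficients $c_1,\dots,c_k$ and parameters $\sigma_1,\dots,\sigma_k\in\Lambda$. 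A minor technical point to address is that $\cV$ may contain functions for which $\int f\,\diff\delta_{\sigma,x}$ is not finite; but \eqref{eq:approx} presupposes these integrals exist for all $f\in\cV$, so this is not an issue.
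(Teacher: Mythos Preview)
Your second (``slicker'') route is essentially the paper's proof: apply \Cref{thm:richter} to obtain atoms $x_1,\dots,x_k$, use the interior hypothesis to place $L$ in the open simplicial cone they generate, then replace each $\delta_{x_i}$ by a nearby $\delta_{\sigma_i,x_i}$ and use that this open cone persists under small perturbations of its generators. The one step the paper makes explicit that you leave slightly implicit is that, because $L$ is interior, one may take $k=n$ so that the simplicial cone is full-dimensional; this is precisely what licenses your claim that $\ell$ lies in the \emph{open} cone on $v(x_1),\dots,v(x_k)$ (a cone on fewer than $n$ vectors has empty interior in $\rset^n$).
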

\begin{proof}
Since $L$ is a moment functional by \Cref{thm:richter} there is a $k\in\nset$ with $k\leq n$, there are points $x_1,\dots,x_k\in\cX$, and $c_1',\dots,c_k'>0$ such that $L$ is represented by $\sum_{i=1}^k c_i' \delta_{x_i}$.
Since $L$ is in the interior of the moment cone we can assume $k=n$, i.e., $L$ is in the interior of the simplicial cone spanned by $l_i:\cV\to\rset$ which are represented by $\delta_{x_i}$ for all $i=1,\dots,n$.

Since $\cF$ is a Dirac-approximating family there exist $\sigma_1,\dots,\sigma_n\in\Lambda$ such that $L$ is in the interior of the simplicial cone spanned by $L_i$ represented by $\delta_{\sigma_i,x_i}$, $i=1,\dots,n$, which proves the assertion.
\end{proof}

The proof of \Cref{thm:main} is considerably easier and shorter than the proof of Theorem 2.6 in \cite{ambros14}.
When we take in \Cref{thm:main} the family $\cF$ to be absolutely continuous with respect to the Lebesgue measure $\lambda$ on $\rset^n$ (e.g.\ by \Cref{exm:gaussian}) then we immediately get the main result of \cite[Thm.\ 2.6]{ambros14}.

\begin{cor}[{\cite[Thm.\ 2.6]{ambros14}}]\label{cor:ambrosie}
Let $n\in\nset$, let $\lambda$ be the Lebesgue measure on $\rset^n$, let $\cX\subseteq\rset^n$ be a closed set such for any point $x\in\cX$ we have $\lambda(\cX\cap B_\varepsilon(x))>0$ for all $\varepsilon>0$ and $x\in\cX$ where $B_\varepsilon(x)$ is the ball around $x$ of radius $\varepsilon$, let $I\subsetneq\nset_0^n$ be finite with $0\in I$, let $\cV = \lin\{x^\alpha \,|\, \alpha\in I\}$, and let $L:\cV\to\rset$ be a linear functional.
The following are equivalent:
\begin{enumerate}[(i)]
\item $L$ is in the interior of the moment cone.

\item There exists a $\rho\in L^1(\cX,\lambda)$ with $\rho\geq 0$ such that all $f\in\cV$ are $\rho~\diff\lambda$-integrable and
\[L(f) = \int_\cX f(x)\cdot \rho(x)~\diff\lambda(x)\]
holds for all $f\in\cV$, i.e., $L$ is represented by $\mu$ on $\cX$ with $\diff\mu = \rho~\diff\lambda$.
\end{enumerate}
\end{cor}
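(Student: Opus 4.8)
The plan is to derive Corollary \ref{cor:ambrosie} directly from \Cref{thm:main}, with the only real work being the verification that the hypotheses of \Cref{thm:main} are met and that the implication (ii)$\Rightarrow$(i) holds. First I would fix the vector space $\cV = \lin\{x^\alpha \mid \alpha\in I\}$ on $\cX$; since $I$ is finite, $\cV$ is finite-dimensional, say of dimension $n' = |I| \leq |I|$ (after removing linear dependencies on $\cX$, but on a set with nonempty interior-like property the monomials are linearly independent, so $n' = |I|$). This is the ambient setting of \Cref{thm:main} with the measurable space $(\cX,\fA)$ where $\fA$ is the Borel $\sigma$-algebra restricted to $\cX$.

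For the implication (i)$\Rightarrow$(ii): I would build a Dirac-approximating family for $\cV$ that is absolutely continuous with respect to $\lambda$. The natural candidate is a multivariate Gaussian, $g_{\sigma,x}(y) = (2\pi\sigma^2)^{-n/2} e^{-|x-y|^2/(2\sigma^2)}$, restricted and renormalized to $\cX$: set $\diff\delta_{\sigma,x}(y) := \one_\cX(y)\, g_{\sigma,x}(y)\,\diff\lambda(y)$ (optionally divided by $\int_\cX g_{\sigma,x}\,\diff\lambda$, which is positive precisely because of the hypothesis $\lambda(\cX\cap B_\varepsilon(x))>0$). I would then check \eqref{eq:approx}: for each fixed $x\in\cX$ and each polynomial $f\in\cV$, $\lim_{\sigma\searrow 0}\int_\cX f(y)\,g_{\sigma,x}(y)\,\diff\lambda(y) = f(x)$ — this follows from the standard approximate-identity argument, splitting the integral into a small ball $B_\varepsilon(x)$ where $f$ is nearly constant and the complement where the Gaussian tail is negligible, using that polynomials grow at most polynomially so the tail integral still vanishes (this is exactly the mechanism behind \Cref{exm:gaussian}, just in $n$ dimensions and with polynomial rather than bounded test functions, plus the renormalization factor tends to $1$). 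With this family in hand, \Cref{thm:main} immediately yields points $x_1,\dots,x_k\in\cX$, coefficients $c_i>0$, and parameters $\sigma_i$ with $L(f) = \sum_{i=1}^k c_i \int_\cX f(y)\,g_{\sigma_i,x_i}(y)\,\diff\lambda(y)$ (suitably renormalized); then $\rho(y) := \sum_{i=1}^k c_i\, \one_\cX(y)\, g_{\sigma_i,x_i}(y)$ (divided by the normalizing constants) is in $L^1(\cX,\lambda)$, is nonnegative, makes every $f\in\cV$ integrable (Gaussian decay beats polynomial growth), and represents $L$.

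For the converse (ii)$\Rightarrow$(i): given such a $\rho\geq 0$ in $L^1$, I would argue that $L$ lies in the interior of the moment cone. The point is that $\diff\mu = \rho\,\diff\lambda$ has support of positive Lebesgue measure — indeed full ``spread'' over $\cX$ in the sense that for any nonzero $p\in\cV$ with $p\geq 0$ on $\cX$ one has $L(p) = \int_\cX p\,\rho\,\diff\lambda$, and I claim this is strictly positive. If it were zero, then $p\cdot\rho = 0$ $\lambda$-a.e., so $\rho = 0$ on the set $\{p>0\}$; but $\{p>0\}$ is relatively open and dense in $\cX$ (a nonzero polynomial vanishes on a $\lambda$-null set, and by the ball hypothesis every point of $\cX$ has neighborhoods of positive measure meeting $\{p>0\}$), forcing $\rho = 0$ $\lambda$-a.e., hence $L=0$ — but $L(1) = \int_\cX \rho\,\diff\lambda$ would then vanish, contradicting $\rho\not\equiv 0$ (and $L$ being, say, not the zero functional; one should note $L$ could be zero only if $\rho\equiv 0$, and then $L$ is a vertex, not interior — so implicitly we also need $L\neq 0$, which is forced once $\rho\geq 0$ is not a.e.\ zero). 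Strict positivity of $L$ on all nonzero nonnegative elements of $\cV$ is exactly the characterization of the interior of the moment cone (a standard separation/duality fact; cf.\ Richter's Theorem and the structure of the moment cone).

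The main obstacle I anticipate is purely bookkeeping rather than conceptual: making the ``renormalized Gaussian restricted to $\cX$'' genuinely a Dirac-approximating family uniformly enough that \eqref{eq:approx} holds for the specific (possibly unbounded) polynomial test functions in $\cV$, and confirming that the hypothesis $\lambda(\cX\cap B_\varepsilon(x))>0$ is precisely what guarantees both that the normalizing constants are positive and that they converge to $1$ as $\sigma\searrow 0$ (so that the limit picks out $f(x)$ and not some averaged value). Everything else — dimension count, $L^1$-membership, integrability of polynomials against Gaussians, and the moment-cone-interior characterization in the converse — is routine.
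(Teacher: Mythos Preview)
Your approach is essentially the same as the paper's: the paper derives \Cref{cor:ambrosie} from \Cref{thm:main} by taking the Gaussian family of \Cref{exm:gaussian} as the absolutely continuous Dirac-approximating family, which is precisely your (i)$\Rightarrow$(ii) argument with the restriction/renormalization details made explicit. You additionally supply the (ii)$\Rightarrow$(i) direction via strict positivity on nonzero nonnegative elements of $\cV$, which the paper leaves entirely implicit; your argument there is correct and standard.
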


We will now discuss in which ways \Cref{thm:main} is stronger than \Cref{cor:ambrosie} and show how \Cref{thm:main} applies to moment functionals on the boundary of the moment cone.

\subsection{Density $\rho$ can be chosen to be $C^\infty$ and even $C^\infty_c$}
From \Cref{exm:gaussian} we see that in fact in \Cref{cor:ambrosie} we can choose $\rho$ to be a Schwartz function.
We can even choose $\rho$ to have compact support when the Dirac-approximating family $\cF$ does not come from the Gaussian distribution but from $C^\infty_c$-functions, e.g., a molifier.
We can also generate $\cF$ from characteristic functions $\chi_{B_\varepsilon(x)\cap\cX}$.

\begin{cor}\label{cor:densityExtension}
If in \Cref{thm:main} we additionally have that the Dirac-approximating family $\cF$ is absolutely continuous (with $L^1$-, $C^\infty$-, or $C_c^\infty$-densities) with respect to a measure $\mu$ on $\cX$ then also $L$ has an absolutely continuous representing measure (with a $L^1$-, $C^\infty$-, or $C_c^\infty$-density, respectively) with respect to $\mu$ on $\cX$.
\end{cor}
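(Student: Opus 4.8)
The plan is to push the conclusion of \Cref{thm:main} through the extra absolute-continuity hypothesis, so that the desired density appears as an explicit finite positive combination of the densities of the family members selected by the theorem. By \Cref{thm:main} there are $k\in\nset$ with $k\leq n$, points $x_1,\dots,x_k\in\cX$, coefficients $c_1,\dots,c_k>0$, and parameters $\sigma_1,\dots,\sigma_k\in\Lambda$ with
\[
L(f) = \sum_{i=1}^k c_i\int_\cX f(y)~\diff\delta_{\sigma_i,x_i}(y)
\qquad\text{for all }f\in\cV ;
\]
note that the $\sigma_i$ are drawn from the approximating sequences of \Cref{dfn:dirac}, so every $f\in\cV$ is $\delta_{\sigma_i,x_i}$-integrable.

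Next I would invoke the additional assumption: each $\delta_{\sigma_i,x_i}\in\cF$ is absolutely continuous with respect to $\mu$, say $\diff\delta_{\sigma_i,x_i}=g_i~\diff\mu$ with $g_i\geq 0$ lying in $L^1(\cX,\mu)$ (respectively in $C^\infty$, respectively in $C_c^\infty$). I would then set
\[
\rho := \sum_{i=1}^k c_i\cdot g_i .
\]
Since $L^1(\cX,\mu)$, $C^\infty$, and $C_c^\infty$ are vector spaces, $\rho$ lies in the same class as the $g_i$; since the $c_i$ are positive and the $g_i$ nonnegative, $\rho\geq 0$; and in the $C_c^\infty$-case $\supp\rho\subseteq\bigcup_{i=1}^k\supp g_i$ is compact, being a finite union of compact sets. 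Finally, for $f\in\cV$ linearity of the integral together with absolute continuity gives
\[
\int_\cX f(x)\cdot\rho(x)~\diff\mu(x)
= \sum_{i=1}^k c_i\int_\cX f(x)\cdot g_i(x)~\diff\mu(x)
= \sum_{i=1}^k c_i\int_\cX f(x)~\diff\delta_{\sigma_i,x_i}(x)
= L(f),
\]
and the same computation with $|f|$ in place of $f$ shows that $f$ is $\rho~\diff\mu$-integrable; hence the measure $\mu_\rho$ with $\diff\mu_\rho=\rho~\diff\mu$ is the asserted absolutely continuous representing measure of the stated regularity.

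I do not expect a genuine obstacle here: the statement is essentially a bookkeeping consequence of \Cref{thm:main}, and the only points deserving a word are (a) the $\cV$-integrability of $f$ against each $\delta_{\sigma_i,x_i}$, which is already built into \Cref{dfn:dirac} and inherited by the parameters produced in the proof of \Cref{thm:main}, and (b) the stability of the three function classes under finite nonnegative linear combinations — and, for $C_c^\infty$, the compactness of a finite union of supports. For concreteness, the Gaussian family of \Cref{exm:gaussian} yields $\rho$ a Schwartz function (in particular $C^\infty$), a mollifier family yields $\rho\in C_c^\infty$, and a family built from normalized characteristic functions $\chi_{B_\varepsilon(x)\cap\cX}$ yields $\rho\in L^1(\cX,\lambda)$.
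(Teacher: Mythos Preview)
Your proposal is correct and is exactly the argument the paper has in mind: the corollary is stated there without proof as an immediate consequence of \Cref{thm:main}, and your write-up spells out precisely this bookkeeping step of forming $\rho=\sum_{i=1}^k c_i g_i$ and noting the stability of the three function classes under finite positive combinations.
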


Dirac-approximation families with $C^\infty$- or even with $C_c^\infty$-densities always exists for the cases covered by \Cref{cor:ambrosie}.
Take the molifier $S_\varepsilon$ as the densities with suitable normalization constants.

\subsection{Extension of the function space $\cV$}
We have that $0\in I$, i.e., $1\in\cV$, is not required in \Cref{thm:main} since it is not required in \Cref{thm:richter}.
Additionally, when $\cV$ are ``only'' continuous functions instead of polynomials then \Cref{exm:gaussian} always gives a Dirac-approximating family.
If we have a function $f\in\cV$ which is discontinuous then \Cref{exm:discontinuous} provides an example where no absolutely continuous representing measure exists, at least not with respect to the Lebesgue measure.

\subsection{Density $\rho$ can be chosen with respect to more than the Lebesgue measure}
While in \Cref{exm:discontinuous} we can not get an absolutely representing measure with respect to the Lebesgue measure $\lambda$, we get absolutely representing measures with respect to the measure $\mu = \lambda + \delta_1$.
To see this we have to construct a Dirac-approximating family $\cF$ which is absolutely continuous with respect to $\mu$.
If $x\in [0,2]\setminus\{1\}$ we take for every $\varepsilon>0$ a function $f_{\varepsilon,x}\in C_c^\infty([0,2],\rset)$ such that $f_{\varepsilon,x}(1)=0$, $f_{\varepsilon,x}\geq 0$, $\supp f_{\varepsilon,x}\subseteq [x-\varepsilon,x+\varepsilon]$, and $\int_0^2 f_{\varepsilon,x}(y)~\diff\lambda(y) = 1$.
Define $\delta_{\varepsilon,x}$ by $\diff\delta_{\varepsilon,x} := f_{\varepsilon,x}~\diff\mu$. Continuity on $[0,1)\cup (1,2]$ then gives (\ref{eq:approx}).
If $x=1$ then take for every $\varepsilon>0$ a function $f_{\varepsilon,1}\in C_c^\infty([0,2],\rset)$ with $f_{\varepsilon,1}(1) = 1$, $0\leq f_{\varepsilon,x}\leq 1$, and $\supp f_{\varepsilon,1} \subseteq [1-\varepsilon,1+\varepsilon]$.
Then (\ref{eq:approx}) also holds.
This family $\cF$ provides an absolutely continuous representing measure with respect to $\mu = \lambda + \delta_1$ with density $\rho\in C^\infty([0,2],\rset)$.
That is summarized already in \Cref{cor:densityExtension}.

\subsection{Extension of $\cX$}
That $\cX$ is closed in \Cref{cor:ambrosie} is not necessary.
Additionally, general $\cX$ can be used, especially when the Lebesgue measure $\lambda$ on $\rset^n$ is replaced by other measures on $\cX$.
For example let $\cX = \cX_1 \cup \cX_2\subseteq\rset^2$ with $\cX_1 = [-1,0]\times \{0\}$ and $\cX_2 = [0,1)\times [-1,1]$.
Take on $\cX_1$ the $1$-dimensional Lebesgue measure $\lambda_1$, i.e., $\lambda_1([a,b]\times\{0\}) = b-a$ for all $-1\leq a \leq b \leq 0$, and take on $\cX_2$ the Lebesgue measure $\lambda_2$ on $\rset^2$.
Then $\lambda = \lambda_1 + \lambda_2$ is well-defined on $\cX = \cX_1\cup\cX_2$ since $\cX_1\cap\cX_2 = \{(0,0)\}$ and $\lambda_1(\{(0,0)\}) = \lambda_2(\{(0,0)\}) = 0$.
An absolutely continuous family $\cF$ on $\cX$ with respect to $\lambda$ can then be constructed similarly to the previous paragraph by using appropriate densities in $C^\infty(\cX,\rset)$.

We can also enforce that $\lambda$ has some mass on subsets of $\cX$, e.g., by writing $\cX = [0,1]\times [-1,1] = \cX_1\cup\cX_2$ with $\cX_1 = \{0\}\times [-1,1]$ and $\cX_2 = (0,1]\times [-1,1]$.
Then on $\cX_1$ we can take the $1$-dimensional Lebesgue measure and on $\cX_2$ the $2$-dimensional Lebesgue measure.
Single points $\{x\}$ must be treated with the Dirac measure $\delta_x$ in a similar way as the absolutely continuous measure was constructed in the previous paragraph with respect to $\mu = \lambda + \delta_1$.

This procedure can be extended straight forward to general $\cX = \bigcup_{i\in I} \cX_i$, $I$ some index set, which is composed of e.g.\ open manifolds $\cX_i$ in $\rset^n$ or abstract measurable sets $\cX_i$.
Important is that $\cV$ are measurable functions on $\cX$ and on each $\cX_i$ we take a measure $\lambda_i$ such that we find an absolutely continuous Dirac-approximation family $\cF$ with respect to $\lambda = \sum_{i\in I} \lambda_i$.
\Cref{thm:main} then provides an absolutely continuous representing measure for all moment functional in the interior of the moment cone.
All that is already summarized in \Cref{cor:densityExtension}.

\subsection{Absolutely continuous representing measures for boundary moment functionals}
If $L$ lies on the boundary of the moment cone then by \cite[Prop.\ 7]{didio17Cara} we can restrict $\cX$ to some $\cY\subsetneq\cX$ such that $\cV|_\cY$ spans the unique face of the moment cone which contains $L$ in its relative interior.
Hence, $L$ is in the (relative) interior of a (smaller) moment cone spanned by $\cV|_\cY$ where again \Cref{thm:main} applies.

\begin{cor}
Let $n\in\nset$, let $(\cX,\fA)$ be a measurable space, and let $\cV$ be a $n$-dimensional real vector space of measurable functions $f:\cX\to\rset$.
If $L:\cV\to\rset$ is a moment functional on the boundary of the moment cone then there exists a $\cY\subsetneq\cX$ such that $L$ lies in the relative interior of the moment cone generated by $\cV|_\cY$ with dimension $\dim\cV|_\cY < n$.
If additionally there exists a measure $\mu$ on $\cY$ and an absolutely continuous Dirac-approximating family $\cF$ for $\cV|_\cY$ (with $L^1$-, $C^\infty$-, or $C_c^\infty$-densities) with respect to $\mu$ then $L$ has an absolutely continuous representing measure (with $L^1$-, $C^\infty$- or $C_c^\infty$-density) with respect to $\mu$ on $\cY$.
\end{cor}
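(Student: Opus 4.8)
The plan is to reduce the boundary case to \Cref{thm:main} by passing to the face of the moment cone in whose relative interior $L$ lies. \textbf{Step 1 (restriction of the space).} Since $L$ is a moment functional on the boundary of the moment cone $\cS_\cV$ of $\cV$ on $\cX$, \cite[Prop.\ 7]{didio17Cara} yields a subset $\cY\subsetneq\cX$ such that every representing measure of $L$ is carried by $\cY$ and the restrictions $\cV|_\cY$ span the unique face $\fF$ of $\cS_\cV$ that contains $L$ in its relative interior. In particular $L(f)$ depends only on $f|_\cY$, so the restriction map $r:\cV\to\cV|_\cY$, $f\mapsto f|_\cY$, factors $L$ through a well-defined linear functional $\tilde L$ on $\cV|_\cY$ with $L=\tilde L\circ r$. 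Writing $m:=\dim\cV|_\cY$, we get $m<n$: because $L$ lies on the boundary there is a nonzero $p\in\cV$ with $p\geq 0$ on $\cX$ and $L(p)=0$, whence $\int_\cX p\,\diff\nu=0$ for every representing measure $\nu$ of $L$; as $\nu$ is carried by $\cY$ this forces $p|_\cY=0$, so $\ker r\neq\{0\}$ and $r$ drops the dimension.

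\textbf{Step 2 ($\tilde L$ lies in the interior).} The adjoint $r^*:(\cV|_\cY)^*\to\cV^*$ of the surjection $r$ is injective, maps $\cS_{\cV|_\cY}$ onto $\fF$, and identifies $(\cV|_\cY)^*$ with $\lin\fF\subseteq\cV^*$; moreover $\cS_{\cV|_\cY}$ is full-dimensional in $(\cV|_\cY)^*$, since any $q\in\cV|_\cY$ annihilated by every $\ell\in\cS_{\cV|_\cY}$ is in particular annihilated by all point evaluations $\delta_y$, $y\in\cY$, hence $q=0$ in $\cV|_\cY$. Therefore the relative interior of $\fF$ corresponds under $r^*$ exactly to the genuine (i.e.\ $m$-dimensional) interior of $\cS_{\cV|_\cY}$, and since $L=r^*\tilde L$ lies in the relative interior of $\fF$ by the choice of $\fF$, the functional $\tilde L$ lies in the interior of $\cS_{\cV|_\cY}$. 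This translation between the face of $\cS_\cV$ and the honest interior of the restricted moment cone is the one place where the bookkeeping with the quotient map $r$ must be carried out carefully; everything before and after it is a direct appeal to results already established.

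\textbf{Step 3 (apply \Cref{thm:main}).} Apply \Cref{thm:main} with $(\cX,\fA)$, $\cV$, $n$ replaced by $\cY$ (with the trace $\sigma$-algebra), $\cV|_\cY$, $m$, and with the assumed absolutely continuous Dirac-approximating family $\cF$ for $\cV|_\cY$ with respect to $\mu$. This produces $k\leq m$ points $x_1,\dots,x_k\in\cY$, coefficients $c_1,\dots,c_k>0$, and parameters $\sigma_1,\dots,\sigma_k\in\Lambda$ so that $\tilde L$ is represented by $\nu:=\sum_{i=1}^k c_i\,\delta_{\sigma_i,x_i}$ on $\cV|_\cY$. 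Viewing $\nu$ as a measure on $\cX$ (extended by zero outside $\cY$) and using $L=\tilde L\circ r$ together with $\int_\cX f\,\diff\nu=\int_\cY f|_\cY\,\diff\nu$, we obtain $L(f)=\int_\cX f\,\diff\nu$ for all $f\in\cV$. Finally, each $\delta_{\sigma_i,x_i}$ is absolutely continuous with respect to $\mu$ with the prescribed regularity of its density, hence so is the finite nonnegative combination $\nu$; by (the argument of) \Cref{cor:densityExtension} one may even replace $\nu$ by a representing measure whose density with respect to $\mu$ lies in $L^1$, $C^\infty$, or $C_c^\infty$ according to the regularity of the densities of $\cF$. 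The main obstacle is Step 2; Steps 1 and 3 are routine once the correct face has been isolated.
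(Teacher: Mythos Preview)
Your proof is correct and follows the same approach as the paper: invoke \cite[Prop.~7]{didio17Cara} to pass to a proper subset $\cY$ on which $L$ lies in the relative interior of the restricted moment cone, then apply \Cref{thm:main} (together with \Cref{cor:densityExtension}) to obtain the absolutely continuous representing measure with the desired regularity. The paper states this in two sentences without further detail; your Steps~1--3 spell out the bookkeeping (the restriction map $r$, the dimension drop, and why the relative interior of the face corresponds to the genuine interior of $\cS_{\cV|_\cY}$) that the paper leaves implicit.
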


\section{Absolutely Continuous Representing Measures of Full (Signed) Moment Functionals: Compact Case on $[0,1]^n$}
\label{sec:full01}

We have seen in the previous section how easy the truncated case can be solved once one introduces the concept of Dirac approximating sequences in \Cref{dfn:dirac}.
We will now treat the much more complicated case of full moment functionals.

For this section we remind the reader of the concept of derivatives of moments and moment functionals which was introduced in \cite{didio23gaussian}.
For a moment sequence $s = (s_\alpha)_{\alpha\in\nset_0^n}$ with representing measure $\mu$, i.e.,
\[s_\alpha = \int_{\rset^n} x^\alpha~\diff\mu(x)\]
for all $\alpha\in\nset_0^n$, we define the $\partial^\beta$-derivative $\partial^\beta s = (\partial^\beta s_\alpha)_{\alpha\in\nset_0^n}$ in the distributional sense
\[\partial^\beta s_\alpha := (-1)^{|\beta|}\cdot\int_{\rset^n} \partial^\beta x^\alpha~\diff\mu(x)\]
for all $\alpha,\beta\in\nset_0^n$.
In the distributional sense we then have that $\partial^\beta s$ is represented by $\partial^\beta\mu$.

We deal here with the full (signed) moment problem on $[0,1]^n$.
To solve it we need the following.

\begin{signHausthm}[{\cite[p.\ 232, II.]{hausdo23}} or see e.g.\ {\cite[Thm.\ 3.3.1]{lorentz86}}]\label{thm:signedHaus}
Let $(s_i)_{i\in\nset_0}\subseteq\rset$ be a real sequence.
The following are equivalent:
\begin{enumerate}[(i)]
\item There exist positive ($\cat([0,1],\rset)$-regular) measures $\mu_1$ and $\mu_2$, i.e., a signed ($\cat([0,1],\rset)$-regular) measure $\mu = \mu_1 - \mu_2$, such that
\[s_i = \int_0^1 x^i~\diff\mu_1(x) - \int_0^1 x^i~\diff\mu_2(x) = \int_0^1 x^i~\diff\mu(x)\]
holds for all $i\in\nset_0$.

\item There exists a $C>0$ such that
\[\sum_{k=0}^d \binom{d}{k}\cdot \left|L_s(x^k\cdot (1-x)^{d-k})\right| < C\]
holds for all $d\in\nset_0$.
\end{enumerate}
\end{signHausthm}

The previous result also holds on $[0,1]^n$ for any $n\in\nset_0$ where (ii) becomes
\begin{enumerate}[(i')]\setcounter{enumi}{1}
\item There exists a $C>0$ such that
\[\sum_{k_1=0}^{d_1} \dots \sum_{k_n=0}^{d_n} \binom{d_1}{k_1}\cdots\binom{d_n}{k_n}\cdot \left|L_s(x_1^{k_1}\cdots x_n^{k_n}\cdot (1-x_1)^{d_1-k_1}\cdots (1-x_n)^{d_n-k_n})\right| < C\]
holds for all $d_1,\dots,d_n\in\nset_0$.
\end{enumerate}

The \Cref{thm:signedHaus} provides us with $C([0,1],\rset)$-regular measures $\mu_1$ and $\mu_2$.
By the Lebesgue decomposition \cite[Thm.\ 3.2.3]{bogachevMeasureTheory} we have that $\mu_i = f_i\cdot\lambda + \mu_{i,0}$ where $\mu_{i,0}$ is an atomic measure.
In the distributional sense $\delta_x'$ is no $C([0,1],\rset)$-regular measure.
We can therefore exclude $\delta_x$ and hence $\mu_{i,0}$ by applying the \Cref{thm:signedHaus} to $\partial s$.

Hence, by using the \Cref{thm:signedHaus} we get the following.
Derivatives $f'$ are meant always as distributional derivatives.
If $f'$ exists in the classical sense both coincide of course, see e.g.\ \cite{grubbDistributions} for more.

\begin{thm}\label{thm:AbsCont}
Let $s = (s_i)_{i\in\nset_0}$ be a real sequence.
The following are equivalent:
\begin{enumerate}[(i)]
\item There exists a piece-wise continuous function $f:[0,1]\to\rset$ such that
\[s_i = \int_0^1 x^i\cdot f(x)~\diff x\]
holds for all $i\in\nset_0$ and $f'$ is a $C([0,1],\rset)$-regular signed measure.

\item There exists a constant $C>0$ such that
\begin{equation}\label{eq:thmAbsCont1a}
\sum_{k=0}^d \binom{d}{k}\cdot \left|  L_s\big( x^k\cdot (1-x)^{d-k} \big) \right| \;<\; C
\end{equation}
and
\begin{equation}\label{eq:thmAbsCont1b}
\sum_{k=0}^d \binom{d}{k}\cdot \left|  L_{\partial s}\big( x^k\cdot (1-x)^{d-k} \big) \right| \;<\; C
\end{equation}
hold for all $d\in\nset_0$.
\end{enumerate}
If additionally $L_s\big(x^k\cdot (1-x)^l\big) \geq 0$ holds for all $k,l\in\nset_0$ then $f\geq 0$ on $[0,1]$.
\end{thm}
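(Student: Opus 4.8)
The plan is to read both implications off two applications of \Cref{thm:signedHaus} --- one to $s$ and one to its distributional derivative $\partial s$ --- combined with the Lebesgue decomposition and the elementary fact that for no $x\in[0,1]$ is $\delta_x'$ a $C([0,1],\rset)$-regular measure. Throughout one uses that if $s$ is represented by $\mu$ then $\partial s$ is represented by $\partial\mu$, equivalently $L_{\partial s}(p)=-L_s(p')$ for all polynomials $p$, so concretely $\partial s_i=-i\,s_{i-1}$; in particular \eqref{eq:thmAbsCont1b} is precisely the Signed-Hausdorff bound (ii) of \Cref{thm:signedHaus} applied to the sequence $\partial s$.

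For $(i)\Rightarrow(ii)$: if $f$ is as in $(i)$, then $\diff\mu=f\,\diff x$ is a finite signed $C([0,1],\rset)$-regular measure --- piece-wise continuity makes $f$ bounded --- so \eqref{eq:thmAbsCont1a} follows from \Cref{thm:signedHaus}. For \eqref{eq:thmAbsCont1b} I would verify, by integration by parts, that $\partial s$ is represented by $f'$ together with suitable point masses at the endpoints $0$ and $1$ absorbing the boundary terms $[x^i f]_0^1$; this measure is $C([0,1],\rset)$-regular because $f'$ is, so \Cref{thm:signedHaus} applied to $\partial s$ yields \eqref{eq:thmAbsCont1b}.

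For $(ii)\Rightarrow(i)$: by \eqref{eq:thmAbsCont1a} and \Cref{thm:signedHaus} there is a $C([0,1],\rset)$-regular signed measure $\mu$ representing $s$; take its Lebesgue decomposition $\mu=f\,\diff x+\mu_0$ with $f\in L^1([0,1])$ and $\mu_0$ purely atomic. By \eqref{eq:thmAbsCont1b} and \Cref{thm:signedHaus} there is a $C([0,1],\rset)$-regular signed measure $\nu$ with the same moments as $\partial\mu$, and since a compactly supported distribution with vanishing moments is zero (polynomials being dense in each $C^m([0,1])$), $\partial\mu=\nu$ is a genuine measure. If $\mu_0\neq 0$, then in $\partial\mu=\partial(f\,\diff x)+\partial\mu_0$ the atoms of $\mu_0$ would contribute $\delta_x'$-terms which, after localizing near each atom (where $f$ is of bounded variation, so $\partial(f\,\diff x)$ is there a measure), cannot be cancelled --- contradicting that $\partial\mu$ is a measure. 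Hence $\mu_0=0$, $\mu=f\,\diff x$, and $f'=\partial\mu$ minus the two endpoint point masses is a $C([0,1],\rset)$-regular signed measure; thus $f$ is of bounded variation, has one-sided limits everywhere, and its right-continuous representative is the piece-wise continuous density sought.

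For the addendum, note that $L_s(x^k(1-x)^l)\geq 0$ for all $k,l\in\nset_0$ is exactly the classical Hausdorff moment condition, so $s$ also has a \emph{positive} representing measure on $[0,1]$; by determinacy of the moment problem on the compact set $[0,1]$ this measure equals $f\,\diff x$, so $f\geq 0$ $\lambda$-almost everywhere and hence, by the one-sided limits, $f\geq 0$ on all of $[0,1]$. (Determinacy can be avoided: for a polynomial $p>0$ on $[0,1]$, expanding $p$ in the Bernstein basis with positive coefficients shows $\int_0^1 p\,f\,\diff x$ is a nonnegative combination of the numbers $L_s(x^k(1-x)^{d-k})\geq 0$, so $\int_0^1 g\,f\,\diff x\geq 0$ for every continuous $g\geq 0$.) The step I expect to be the main obstacle is the bookkeeping of the endpoint point masses along the chain $\partial s\leftrightarrow\partial\mu\leftrightarrow f'$, together with the rigorous identification $\partial\mu=\nu$ that licenses the atom-killing argument; settling what ``piece-wise continuous'' should mean for an arbitrary density of bounded variation is a smaller loose end.
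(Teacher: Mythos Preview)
Your strategy coincides with the paper's: apply \Cref{thm:signedHaus} once to $s$ and once to $\partial s$, then link the two resulting measures via distributional differentiation (the paper phrases this identification as ``$\nu'=\mu$ by Stone--Weierstra{\ss}''). The only organisational difference is in $(ii)\Rightarrow(i)$: you Lebesgue-decompose the measure representing $s$ and kill its atomic part by arguing that $\delta'$-terms cannot be absorbed, whereas the paper goes the other way---it \emph{integrates} the measure $\mu$ representing $\partial s$ to produce $f(x):=\mu((-\infty,x])$ directly and then identifies the measure representing $s$ with $f\,\diff x$ because both have distributional derivative $\mu$ and vanish on $(-\infty,0)$. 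The paper's route is a little shorter since it never needs to argue about cancellation of $\delta'$-singularities; your route, on the other hand, makes the endpoint bookkeeping and the meaning of ``piece-wise continuous'' more explicit than the paper does.

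One point in your atom-killing step needs tightening: you justify that $\partial(f\,\diff x)$ cannot cancel the $\delta_{x_0}'$ by saying ``where $f$ is of bounded variation''. At that stage $f$ is only known to be in $L^1$ from the Lebesgue decomposition, so this is circular. The clean reason is that the distributional derivative of an $L^1$-function can never contain a $\delta'$-singularity: test against $\phi_\varepsilon(x)=\varepsilon\,\psi((x-x_0)/\varepsilon)$ with $\psi'(0)\neq 0$ and let $\varepsilon\to 0$; the $L^1$- and measure-terms vanish while $c\,\psi'(0)$ survives, forcing $c=0$. With this fix your argument goes through.
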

\begin{proof}
(i) $\Rightarrow$ (ii):
Since $f$ is piece-wise continuous we have (\ref{eq:thmAbsCont1a}) by the \Cref{thm:signedHaus}.
Since $f$ is piece-wise continuous we have that $f'$ exists in the distributional sense and it is a signed measure \cite{grubbDistributions}, i.e., also (\ref{eq:thmAbsCont1b}) holds by the \Cref{thm:signedHaus}.
Taking $C>0$ to be the maxima of the constants from (\ref{eq:thmAbsCont1a}) and (\ref{eq:thmAbsCont1b}) proves (ii).

(ii) $\Rightarrow$ (i):
By (\ref{eq:thmAbsCont1a}) we have that there exists a $C([0,1],\rset)$-regular measure $\nu$ representing $s$ and by (\ref{eq:thmAbsCont1b}) there exists a $C([0,1],\rset)$-regular measure $\mu$ representing $\partial s$.
But $\nu'$ exists in the distributional sense \cite{grubbDistributions} and hence by the Stone--Weierstra{\ss} Theorem we have $\nu' = \mu$ since $[0,1]$ is compact.
Hence, $f(x) := \int_{-\infty}^x 1~\diff\mu(y)$ is piece-wise continuous by the Lebesgue decomposition \cite[Vol.\ 1, Thm.\ 3.2.3]{bogachevMeasureTheory}.
Hence, $\diff\nu(x) = f~\diff x$ and therefore $\supp f \subseteq [0,1]$ which proves (i).

If additionally $L_s\big(x^k\cdot (1-x)^l\big) \geq 0$ holds for all $k,l\in\nset_0$ then $f~\diff x$ not only uniquely represents $s$ but is also a moment measure, i.e., $f\geq 0$.
\end{proof}

Unfortunately, \Cref{thm:AbsCont} does not cover all cases of absolutely continuous measures on $[0,1]^n$ as the following examples show.

\begin{exm}\label{exm:counter1}
Let $f(x) = x^{-1/4}$ be on $(0,1]$ and $0$ everywhere else.
Then $f$ is the density of a $C([0,1],\rset)$-regular measure but $f'(x) = -\frac{1}{4}\cdot x^{-5/4}$ on $(0,1)$ is no longer the density of a $C([0,1],\rset)$-regular measure.
\exmsymbol
\end{exm}

The reason that the previous example fails is of course the infinite jump at $x=0$.
But also other effects can appear as the following example shows.

\begin{exm}\label{exm:counter2}
Let $f(x) = x\cdot \sin(x^{-2})$ on $[0,1]$.
Then $f$ is the density of a $C([0,1],\rset)$-regular measure but $f'(x) = \sin(x^{-2}) -2\cdot x^{-2}\cdot \sin(x^{-2})$ on $(0,1)$ is not a density of a $C([0,1],\rset)$-regular measure.
\exmsymbol
\end{exm}

The condition that $f'$ is $C([0,1],\rset)$-regular in \Cref{thm:AbsCont} (i) is therefore really necessary.
It excludes certain sequences $s$ which can be represented by absolutely continuous measures.
But cases as in \Cref{exm:counter1} and \ref{exm:counter2} are covered by the $L^1$- resp.\ $L^2$-treatment in \Cref{sec:fullR}.

The function
\begin{equation}\label{eq:distributionFunction}
\mu\big((-\infty,x]\big) = \int_{-\infty}^x 1~\diff\mu(x)
\end{equation}
is called the \emph{distribution function} of $\mu$, compare with \cite[Vol.\ 1, Ch.\ 1.8]{bogachevMeasureTheory}.
Note the difference, that in \cite[Vol.\ 1, Ch.\ 1.8]{bogachevMeasureTheory} the distribution function is $\mu((-\infty,x))$.
Since $\mu((-\infty,x])$ also exists in the multi-dimensional case
\[\mu\big((-\infty,x_1]\times\dots\times (-\infty,x_n]\big) = \int_{-\infty}^{x_1} \dots \int_{-\infty}^{x_n} 1~\diff\mu(x_1,\dots,x_n)\]
for all $x=(x_1,\dots,x_n)^T\in\rset^n$ with $n\in\nset$ and the \Cref{thm:signedHaus} also holds on $[0,1]^n$ we immediately get the multi-dimensional version of \Cref{thm:AbsCont} in the following which we only state as a corollary of \Cref{thm:AbsCont}.

\begin{cor}\label{cor:absContMultiDim}
Let $n\in\nset$ and let $s = (s_\alpha)_{\alpha\in\nset_0^n}$ be a real sequence.
Then the following are equivalent:
\begin{enumerate}[(i)]
\item There exists an almost everywhere continuous function $f:[0,1]^n\to\rset$ such that
\[s_\alpha = \int_{[0,1]^n} x^\alpha\cdot f(x)~\diff x\]
holds for all $\alpha\in\nset_0^n$ and $\partial_1\dots\partial_n f$ is a $C([0,1]^n,\rset)$-regular signed measure.

\item There exists a constant $C>0$ such that
\begin{multline*}
\sum_{k_1=0}^{d_1} \dots \sum_{k_n=0}^{d_n} \binom{d_1}{k_1}\cdots \binom{d_n}{k_n}\\ \times \left| L_s\big( x^{k_1}\cdots x^{k_n}\cdot (1-x_1)^{d_1-k_1}\cdots (1-x_n)^{d_n-k_n} \big) \right| < C
\end{multline*}
and
\begin{multline*}
\sum_{k_1=0}^{d_1} \dots \sum_{k_n=0}^{d_n} \binom{d_1}{k_1}\cdots \binom{d_n}{k_n}\\ \times \left|  L_{\partial_1\dots\partial_n s}\big( x^{k_1}\cdots x^{k_n}\cdot (1-x_1)^{d_1-k_1}\cdots (1-x_n)^{d_n-k_n} \big) \right| < C
\end{multline*}
hold for all $d_1,\dots,d_n\in\nset_0$.
\end{enumerate}
If additionally
\[L_s\big( x_1^{k_1}\cdots x_n^{k_n}\cdot (1-x_1)^{d_1-k_1}\cdots (1-x_n)^{d_n-k_n}\big) \geq 0\]
holds for all $k_1,\dots,k_n,l_1,\dots,l_n\in\nset_0$ then $f\geq 0$ on $[0,1]^n$.
\end{cor}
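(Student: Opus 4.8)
The plan is to run the proof of \Cref{thm:AbsCont} almost verbatim, replacing the one-dimensional \Cref{thm:signedHaus} by its $n$-dimensional form (the equivalence with condition (i') recorded just after its statement), the single derivative $\partial$ by the mixed derivative $\partial_1\dots\partial_n$, and the one-dimensional distribution function by the multidimensional one in (\ref{eq:distributionFunction}).

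The implication (i) $\Rightarrow$ (ii) is immediate. If $f$ is almost everywhere continuous and represents $s$, the first displayed bound is the $n$-dimensional \Cref{thm:signedHaus} applied to $s$; since $\partial_1\dots\partial_n f$ exists as a $C([0,1]^n,\rset)$-regular signed measure and, by the definition of $\partial^\beta s$, represents $\partial_1\dots\partial_n s$ in the distributional sense, the second bound is the same theorem applied to $\partial_1\dots\partial_n s$. Taking $C$ to be the larger of the two resulting constants finishes this direction.

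For (ii) $\Rightarrow$ (i) the first bound yields, via the $n$-dimensional \Cref{thm:signedHaus}, a $C([0,1]^n,\rset)$-regular measure $\nu$ representing $s$, and the second bound a $C([0,1]^n,\rset)$-regular measure $\mu$ representing $\partial_1\dots\partial_n s$. Since $\partial_1\dots\partial_n\nu$ also represents $\partial_1\dots\partial_n s$ distributionally, and since a compactly supported distribution is determined by its moments (polynomials are dense on the compact set $[0,1]^n$ by the Stone--Weierstra{\ss} Theorem), we get $\partial_1\dots\partial_n\nu=\mu$. Now set
\[
f(x)\;:=\;\mu\big((-\infty,x_1]\times\dots\times(-\infty,x_n]\big),
\]
the distribution function of $\mu$ as in (\ref{eq:distributionFunction}), so that $\partial_1\dots\partial_n f=\mu$ distributionally. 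The decisive point is that $f$ is supported in $[0,1]^n$: if some coordinate satisfies $x_j\geq 1$, then $f$ depends at $x$ only on the remaining coordinates and equals the distribution function of the push-forward $\mu_j$ of $\mu$ obtained by deleting the $j$-th coordinate; all moments of $\mu_j$ are entries $\partial_1\dots\partial_n s_\alpha$ with $\alpha_j=0$, hence vanish since $\partial_j x^\alpha=0$, so $\mu_j=0$ by moment determinacy and $f(x)=0$; and $f(x)=0$ whenever some $x_j<0$. Therefore $w:=\nu-f\,\diff x$ is a compactly supported distribution with $\partial_1\dots\partial_n w=0$, its Fourier transform is entire on $\cset^n$ (Paley--Wiener) and is annihilated by $(i\xi_1)\cdots(i\xi_n)$, hence $\widehat w\equiv 0$ and $w=0$, i.e.\ $\diff\nu(x)=f(x)\,\diff x$. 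Finally $f$ is almost everywhere continuous: its set of discontinuities lies in the union over $j\in\{1,\dots,n\}$ of the at most countably many hyperplanes $\{x_j=c\}$ on which $|\mu|$ has positive mass, a Lebesgue null set. This proves (i). For the positivity addendum, the assumed nonnegativity of $L_s$ on the products $x_1^{k_1}\cdots x_n^{k_n}(1-x_1)^{d_1-k_1}\cdots(1-x_n)^{d_n-k_n}$ is the classical Hausdorff positivity condition, so $s$ has a positive representing measure on $[0,1]^n$, which by the uniqueness just used equals $\nu=f\,\diff x$; hence $f\geq 0$ almost everywhere, and after modifying $f$ on a Lebesgue null set (which changes no moment) we may take $f\geq 0$ on $[0,1]^n$.

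I expect the main obstacle to be the implication $\partial_1\dots\partial_n w=0\Rightarrow w=0$. In the one-dimensional \Cref{thm:AbsCont} the kernel of $\partial$ consists of constants, which a compactly supported object automatically kills; for $n\geq 2$ the kernel of $\partial_1\dots\partial_n$ also contains every distribution independent of one of the coordinates (all sums $\sum_{j=1}^n v_j$ with $v_j$ not depending on $x_j$), so one genuinely needs both the identification of the support of the multidimensional distribution function and the Paley--Wiener argument above (equivalently: compact support forces each such $v_j$ to be constant with $\sum_j v_j=0$). The almost-everywhere continuity of the multidimensional distribution function of a \emph{signed} measure likewise requires the small counting argument indicated.
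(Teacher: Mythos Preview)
Your proof is correct and follows the paper's approach, which is literally ``adapt the proof of \Cref{thm:AbsCont}.'' You in fact go further than the paper: you correctly isolate the one step that does \emph{not} transfer verbatim---in dimension $n\geq 2$ the kernel of $\partial_1\cdots\partial_n$ contains far more than constants, so $\partial_1\cdots\partial_n(\nu-f\,\diff x)=0$ alone does not give $\nu=f\,\diff x$---and you supply the two ingredients needed to close it (the vanishing of the marginals of $\mu$, forcing $\supp f\subseteq[0,1]^n$, and the Paley--Wiener argument killing $w$), all of which the paper's one-line proof leaves to the reader.
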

\begin{proof}
Adapt the proof of \Cref{thm:AbsCont}.
\end{proof}

We have seen that (\ref{eq:distributionFunction}) is a piece-wise continuous function since $\mu$ is $C([0,1],\rset)$-regular, see also \cite[Thm.\ 1.8.1]{bogachevMeasureTheory} for the special case $\mu\geq 0$.
Hence, its integral (anti-derivative)
\[f^{(-1)}(x) := \int_{-\infty}^x \mu\big((-\infty,y]\big)~\diff y\]
is continuous ($f^{(-1)}\in C(\rset,\rset)$) with $f(x) = 0$ for all $x\leq 0$ and $f(x) = \mu(\rset)$ for all $x\geq 1$.
Continuing in this way we define
\begin{equation}\label{eq:antiDeriv}
f^{(-r-1)}(x) := \int_{-\infty}^x f^{(-r)}(y)~\diff y \qquad\in C^{r-2}(\rset,\rset)
\end{equation}
and get $f^{(-r)}(x) = 0$ for all $x\leq 0$ and $r\in\nset$.

Therefore, let $t = (t_i)_{i\in\nset_0}$ be a sequence and set $s = \partial^{r+1} t$ in \Cref{thm:AbsCont} then we get a piece-wise continuous representing measure of $s = \partial^{r+1} t$ and by integration a $C^r$-absolutely continuous representing measure of $t$.

\begin{cor}\label{cor:CrAbsCont}
Let $s=(s_i)_{i\in\nset_0}$ be a real sequence and let $r\in\nset_0$.
Then the following are equivalent:
\begin{enumerate}[(i)]
\item There exists a $C^r$-function $f:\rset\to\rset$ such that
\[f^{(j)}(x) = 0\]
holds for all $j=0,1,\dots,r$ and $x\leq 0$, $f^{(r+2)}$ is a $C([0,1],\rset)$-regular signed measure, and
\[s_i = \int_0^1 x^i\cdot f(x)~\diff x\]
holds for all $i\in\nset_0$.

\item There exists a constant $C>0$ such that
\[\sum_{k=0}^d \binom{d}{k}\cdot \left|  L_{\partial^{r+1} s}\big( x^k\cdot (1-x)^{d-k} \big) \right| < C\]
and
\[\sum_{k=0}^d \binom{d}{k}\cdot \left|  L_{\partial^{r+2} s}\big( x^k\cdot (1-x)^{d-k} \big) \right| < C\]
hold for all $d\in\nset_0$.
\end{enumerate}
If additionally $L_s \big(x^k\cdot (1-x)^l\big) \geq 0$ holds for all $k,l\in\nset_0$ then $f\geq 0$ on $[0,1]$.
\end{cor}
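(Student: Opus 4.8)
The plan is to deduce the corollary from \Cref{thm:AbsCont} applied to the shifted sequence $\partial^{r+1}s$ rather than to $s$, passing between the two by $(r+1)$-fold integration as indicated around \eqref{eq:antiDeriv}. One caveat: (i) must be read with $\supp f\subseteq[0,1]$ — equivalently, $f^{(j)}(x)=0$ also for $x\ge 1$ and $0\le j\le r$ — which is automatic for the $f$ produced below and is genuinely needed in ``(i) $\Rightarrow$ (ii)'' (it fails e.g.\ for the density $f(x)=x$ on $[0,1]$ with $r=0$). Two facts about the operator $\partial^{r+1}$ on real sequences drive the argument: it is injective, since $\partial^{r+1}x^i\equiv 0$ for $i\le r$ forces $(\partial^{r+1}s)_i=0$ for $i\le r$, while $(\partial^{r+1}s)_{j+r+1}=(-1)^{r+1}\,\tfrac{(j+r+1)!}{j!}\,s_j$ recovers every $s_j$; consequently, any measure representing $\partial^{r+1}s$ on $[0,1]$ has its $i$-th moment equal to $0$ for $i=0,\dots,r$. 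Now condition (ii) of the corollary is literally condition (ii) of \Cref{thm:AbsCont} for the sequence $\partial^{r+1}s$ (the two displays being $\sum_k\binom{d}{k}\,|L_{\partial^{r+1}s}(x^k(1-x)^{d-k})|<C$ and $\sum_k\binom{d}{k}\,|L_{\partial(\partial^{r+1}s)}(x^k(1-x)^{d-k})|<C$, with $\partial(\partial^{r+1}s)=\partial^{r+2}s$), so by \Cref{thm:AbsCont} it is equivalent to the existence of a piece-wise continuous $h\colon[0,1]\to\rset$ with $(\partial^{r+1}s)_i=\int_0^1 x^i h(x)\,\diff x$ for all $i\in\nset_0$ and $h'$ a $C([0,1],\rset)$-regular signed measure.

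For ``(ii) $\Rightarrow$ (i)'' I would extend $h$ by zero to $\rset$ and let $f$ be its $(r+1)$-fold antiderivative taken from $-\infty$, built as in \eqref{eq:antiDeriv}. Boundedness and piece-wise continuity of $h$ give $f\in C^r(\rset,\rset)$ with $f^{(r+1)}=h$, $f^{(r+2)}=h'$ a $C([0,1],\rset)$-regular signed measure, and $f^{(j)}(x)=0$ for $x\le 0$, $0\le j\le r$. The heart of the matter is that $f$ represents $s$: by Cauchy's formula for iterated integration $f^{(j)}(x)=\tfrac{1}{(r-j)!}\int_0^1 (x-t)^{r-j}\,h(t)\,\diff t$ for $0\le j\le r$, and for $x\ge 1$ this vanishes because $(x-t)^{r-j}$ is a polynomial in $t$ of degree $\le r$ while $h$ has vanishing moments up to order $r$; combined with $f^{(r+1)}=h$ vanishing on $(1,\infty)$, this gives $\supp f\subseteq[0,1]$ and that $f,f',\dots,f^{(r)}$ vanish at both endpoints of $[0,1]$. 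Hence the $(r+1)$-fold integration by parts carries no boundary terms, so $\partial^{r+1}(f\,\diff x)=h\,\diff x$ as a compactly supported distribution; testing against the monomials $x^i$ this says $\partial^{r+1}\tilde s=\partial^{r+1}s$ for $\tilde s_i:=\int_0^1 x^i f(x)\,\diff x$, whence $\tilde s=s$ by injectivity of $\partial^{r+1}$. With the regularity of $f$ already recorded, this is (i).

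For ``(i) $\Rightarrow$ (ii)'', let $f$ be as in (i) (so $\supp f\subseteq[0,1]$); then $f,f',\dots,f^{(r)}$ vanish at both endpoints, and $f^{(r+1)}$ is piece-wise continuous — it is the distribution function of the signed measure $f^{(r+2)}$, cf.\ the proof of \Cref{thm:AbsCont} and \cite{grubbDistributions}. By $(r+1)$-fold integration by parts with vanishing boundary terms, $\partial^{r+1}s$ is represented by the $C([0,1],\rset)$-regular measure $f^{(r+1)}\,\diff x$ on $[0,1]$, and therefore $\partial^{r+2}s=\partial(\partial^{r+1}s)$ is represented by $\partial(f^{(r+1)}\,\diff x)=f^{(r+2)}$, a $C([0,1],\rset)$-regular signed measure by hypothesis; applying the \Cref{thm:signedHaus} to each of these two sequences yields the two bounds of (ii), with $C$ the larger of the resulting constants. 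Finally, if $L_s(x^k(1-x)^l)\ge 0$ for all $k,l\in\nset_0$ then $s$ is a Hausdorff moment sequence, so it has a nonnegative $C([0,1],\rset)$-regular representing measure; representing measures supported on the compact interval $[0,1]$ are unique — a difference of two would be a nonzero signed measure annihilating all polynomials, contradicting the Stone--Weierstra{\ss} Theorem — so this measure coincides with $f\,\diff x$ on $[0,1]$, and continuity of $f$ forces $f\ge 0$ on $[0,1]$, exactly as in \Cref{thm:AbsCont}.

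I expect the main obstacle to be the boundary-term bookkeeping in ``(ii) $\Rightarrow$ (i)'': one must check that the vanishing of the low-order moments of $h$, equivalently $(\partial^{r+1}s)_i=0$ for $i\le r$, is precisely what annihilates all $r+1$ boundary terms of the iterated integration by parts, so that the antiderivative $f$ represents $s$ itself and not a polynomial perturbation of it — this is also the phenomenon forcing $\supp f\subseteq[0,1]$, i.e.\ the reading of (i) noted above. Everything else is a faithful transcription of the proof of \Cref{thm:AbsCont}.
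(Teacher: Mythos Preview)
Your approach is exactly the paper's: apply \Cref{thm:AbsCont} to $\partial^{r+1}s$ and pass back to $s$ via the iterated antiderivative \eqref{eq:antiDeriv}. The paper's proof is a three-line sketch (``use $\partial^{r+1}s$ instead of $s$ in \Cref{thm:AbsCont} \ldots\ by integration \ldots\ proceeding by induction''); you have supplied precisely the missing bookkeeping --- the injectivity of $\partial^{r+1}$ on sequences, the vanishing of $(\partial^{r+1}s)_i$ for $i\le r$, and the resulting vanishing of all boundary terms at $x=1$ in the repeated integration by parts --- that the paper leaves implicit.

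Your caveat is well taken and worth recording: as literally stated, (i) does not force $f^{(j)}(1)=0$, and without that the direction (i)~$\Rightarrow$~(ii) fails (your example $f(x)=x_+$ with $r=0$ is a genuine counterexample, since then $\partial^2 s$ is represented by $\delta_0-\delta_1-\delta_1'$, which is not a measure). The $f$ produced in (ii)~$\Rightarrow$~(i) does satisfy $\supp f\subseteq[0,1]$, so the corollary is correct once (i) is read that way; the paper's proof does not flag this, but it is implicit in the construction via \eqref{eq:antiDeriv} together with the vanishing low-order moments of $\partial^{r+1}s$ that you isolate.
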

\begin{proof}
Use $\partial^{r+1} s$ instead of $s$ in \Cref{thm:AbsCont} and calculate (\ref{eq:antiDeriv}), i.e., $\partial^{r+1} s$ has a piece-wise continuous representing measure.
By integration $\partial^r s$ has a $C^0$-continuous representing measure which is zero at $x=0$ and proceeding by induction with (\ref{eq:antiDeriv}) proves the equivalence.
\end{proof}

It is easy to see that also \Cref{cor:CrAbsCont} has a multi-dimensional version which is similar to \Cref{cor:absContMultiDim}.

In \Cref{cor:CrAbsCont} the $C^r$ function $f$ must fulfill $f^{(j)}(0) = 0$ for all $j=0,1,\dots,r$.
General $f\in C^r([0,1],\rset)$ are not covered by \Cref{cor:CrAbsCont}.
Given a general $f\in C^r([0,1],\rset)$ then we can write it as
\[ f = f_1 + f_2\]
with $f_1^{(j)}(0) = 0$ and $f_2^{(j)}(1)=0$ for all $j=0,1,\dots,r$, e.g., $f_1|_{[0,1/4]}=0$ and $f_2|_{[3/4,1]}=0$.
Mirroring $f_2(x)$ to $f_2(-x)$ and shifting gives a function $\tilde{f}_2$ as in \Cref{cor:CrAbsCont} (i).
We switch from $[0,1]$ to $\big[-\frac{1}{2},\frac{1}{2}\big]$ to avoid shifting.

\begin{thm}\label{cor:CrAbsContGeneral}
Let $n\in\nset$, let $s = (s_\alpha)_{\alpha\in\nset_0^n}$ be a real sequence, and let $r\in\nset_0$.
Then the following are equivalent:
\begin{enumerate}[(i)]
\item There exists a function $f:\big[-\frac{1}{2},\frac{1}{2}\big]^n\to\rset$ such that
\begin{enumerate}[(a)]
\item $\displaystyle s_\alpha = \int_{\big[-\frac{1}{2},\frac{1}{2}\big]^n} x^\alpha\cdot f(x)~\diff x$ holds for all $\alpha\in\nset_0^n$,

\item $\partial_1^{\alpha_1}\dots\partial_n^{\alpha_n} f \in C^0\big( \big[-\frac{1}{2},\frac{1}{2}\big]^n,\rset\big)$ for all $\alpha_1,\dots,\alpha_n = 0,\dots,r$, and

\item $\partial_1^{r+2}\dots\partial_n^{r+2} f$ is a $C^0\big( \big[-\frac{1}{2},\frac{1}{2}\big]^n,\rset\big)$-regular signed measure.
\end{enumerate}

\item There exist $2^n$ real sequences $s^\sigma = (s_\alpha^\sigma)_{\alpha\in\nset_0^n}$ with $\sigma\in \{-1,1\}^n$ and a constant $C>0$ such that
\begin{enumerate}[(a)]
\item $\displaystyle s = \sum_{\sigma\in\{-1,1\}^n} s^\sigma$ and

\item for all $\sigma\in\{-1,1\}^n$ the sequence $\tilde{s}^\sigma := \big((-1)^{\frac{1}{2}\langle\alpha,\one-\sigma\rangle}\cdot s_\alpha^\sigma\big)_{\alpha\in\nset_0^n}$ fulfills
\begin{multline*}
\sum_{k_1=0}^{d_1}\cdots \sum_{k_n=0}^{d_n} \binom{d_1}{k_1}\cdots\binom{d_n}{k_n} \left|L_{\partial_1^{r+1}\dots \partial_n^{r+1} \tilde{s}^\sigma} \left( \left(\frac{1}{2}+x_1\right)^{k_1}\cdots \left(\frac{1}{2}+x_n\right)^{k_n}\right.\right.\\
\times \left.\left. \left(\frac{1}{2}-x_1\right)^{d_1-k_1}\cdots \left(\frac{1}{2}-x_n\right)^{d_n-k_n} \right)\right| < C
\end{multline*}
and
\begin{multline*}
\sum_{k_1=0}^{d_1}\cdots \sum_{k_n=0}^{d_n} \binom{d_1}{k_1}\cdots\binom{d_n}{k_n} \left|L_{\partial_1^{r+2}\dots \partial_n^{r+2} \tilde{s}^\sigma} \left( \left(\frac{1}{2}+x_1\right)^{k_1}\cdots \left(\frac{1}{2}+x_n\right)^{k_n}\right.\right.\\
\times \left.\left. \left(\frac{1}{2}-x_1\right)^{d_1-k_1}\cdots \left(\frac{1}{2}-x_n\right)^{d_n-k_n} \right)\right| < C
\end{multline*}
for all $d_1,\dots,d_n\in\nset_0$.
\end{enumerate}
\end{enumerate}
The function $f$ in (i) is unique.
If additionally
\begin{equation}\label{eq:Lpos}
L_s\left(\left(\frac{1}{2} + x_1\right)^{k_1}\cdots\left(\frac{1}{2} + x_n\right)^{k_n}\cdot \left(\frac{1}{2}-x_1\right)^{l_1}\cdots\left(\frac{1}{2}-x_n\right)^{l_n}\right) \geq 0
\end{equation}
holds for all $k_1,\dots,k_n,l_1,\dots,l_n\in\nset_0$ then $f\geq 0$ in (i).
\end{thm}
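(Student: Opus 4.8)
The plan is to reduce the general $C^r$-regularity statement on the cube $\big[-\tfrac12,\tfrac12\big]^n$ to the one-sided-support situation already handled in \Cref{cor:CrAbsCont} (and its multidimensional analogue mentioned after it), by splitting the density into $2^n$ pieces according to which ``corner'' of the cube each piece vanishes near, and by reflecting each piece into a standard model supported near one fixed corner. First I would set up the bijection between $\sigma\in\{-1,1\}^n$ and the $2^n$ corners $v_\sigma := \big(\tfrac{\sigma_1}{2},\dots,\tfrac{\sigma_n}{2}\big)$, and in each coordinate $j$ use the reflection $x_j\mapsto -x_j$ whenever $\sigma_j=-1$. The effect of this reflection on moments is exactly multiplication of $s_\alpha$ by $(-1)^{\alpha_j}$ in the reflected coordinates, which is why the twisted sequence $\tilde s^\sigma = \big((-1)^{\frac12\langle\alpha,\one-\sigma\rangle}\cdot s^\sigma_\alpha\big)_\alpha$ appears: the exponent $\tfrac12\langle\alpha,\one-\sigma\rangle$ counts precisely $\sum_{j:\,\sigma_j=-1}\alpha_j$. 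After reflection every piece has support near the corner $\big(-\tfrac12,\dots,-\tfrac12\big)$, the Hausdorff-type test polynomials $\prod_j\big(\tfrac12+x_j\big)^{k_j}\big(\tfrac12-x_j\big)^{d_j-k_j}$ become the natural basis adapted to that corner (translate by $\tfrac12$ to land on $[0,1]^n$ if one prefers), and the conditions in (ii)(b) are exactly the hypotheses of the multidimensional version of \Cref{cor:CrAbsCont} applied to $\tilde s^\sigma$ with parameter $r$.

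For the implication (i)$\Rightarrow$(ii) I would start from the density $f$ and decompose it as $f=\sum_{\sigma} f_\sigma$ using a smooth (or merely $C^r$) partition of unity on the cube subordinate to the corner regions, arranged so that $f_\sigma$ vanishes to order $r$ at every face \emph{not} meeting the corner $v_\sigma$, as in the displayed ``$f_1|_{[0,1/4]}=0$, $f_2|_{[3/4,1]}=0$'' construction in the paragraph preceding the theorem. Setting $s^\sigma_\alpha := \int x^\alpha f_\sigma(x)\,\diff x$ gives (ii)(a). Reflecting $f_\sigma$ produces a $C^r$ density vanishing to order $r$ at the low faces, so by the easy direction of the multidimensional \Cref{cor:CrAbsCont} its $\partial_1^{r+1}\dots\partial_n^{r+1}$- and $\partial_1^{r+2}\dots\partial_n^{r+2}$-derivatives are $C^0$-regular signed measures, which yields the two bounds in (ii)(b) with a common constant $C$ (the max over the finitely many $\sigma$). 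Conversely, for (ii)$\Rightarrow$(i), each $\tilde s^\sigma$ satisfies the hypotheses of the multidimensional \Cref{cor:CrAbsCont}, so it has a $C^r$ density $\tilde f_\sigma$ vanishing with its first $r$ derivatives at the low faces and with $\partial_1^{r+2}\dots\partial_n^{r+2}\tilde f_\sigma$ a $C^0$-regular signed measure; reflect back to get $f_\sigma$ on the cube, and set $f := \sum_\sigma f_\sigma$. Condition (a) of (i) follows from (ii)(a) by linearity of the integral and reversing the twist; conditions (b) and (c) follow because each mixed derivative of $f$ is a finite sum of the corresponding derivatives of the $f_\sigma$, each of which has the claimed regularity. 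Uniqueness of $f$ is the Stone--Weierstra\ss{} argument already used in \Cref{thm:AbsCont}: two absolutely continuous representing measures on the compact cube with the same moments agree, since polynomials are dense in $C\big(\big[-\tfrac12,\tfrac12\big]^n,\rset\big)$. The positivity addendum follows as in \Cref{thm:AbsCont} and \Cref{cor:absContMultiDim}: if the corner test functionals in \eqref{eq:Lpos} are nonnegative then $f\,\diff x$ is itself a (positive) moment measure, hence $f\geq 0$ a.e., and by continuity everywhere.

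The main obstacle I expect is bookkeeping rather than conceptual: making sure the splitting $f=\sum_\sigma f_\sigma$ can always be done so that \emph{each} piece vanishes to order exactly $r$ at all the ``wrong'' faces \emph{simultaneously} in all $n$ coordinates (a product partition of unity built one coordinate at a time handles this, since vanishing of a mixed derivative at a face is controlled by the one-variable factor in that direction), and checking that the twisted sequence $\tilde s^\sigma$ is genuinely the moment sequence of the reflected density --- i.e.\ that the sign factor $(-1)^{\frac12\langle\alpha,\one-\sigma\rangle}$ is correctly the one produced by substituting $x_j\mapsto -x_j$ for exactly the indices $j$ with $\sigma_j=-1$, and that this substitution carries the basis $\big(\tfrac12+x_j\big)^{k_j}\big(\tfrac12-x_j\big)^{d_j-k_j}$ to itself up to swapping $k_j\leftrightarrow d_j-k_j$, leaving the symmetric double sum in (ii)(b) invariant. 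Once those two points are pinned down, everything else is a direct appeal to the multidimensional form of \Cref{cor:CrAbsCont} applied coordinate-wise and the Stone--Weierstra\ss{} uniqueness argument.
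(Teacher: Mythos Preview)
Your proposal is correct and follows essentially the same route as the paper: decompose $f$ into $2^n$ corner-adapted pieces via a (product) partition of unity, reflect each piece by $x_j\mapsto -x_j$ in the coordinates with $\sigma_j=-1$ so that it vanishes to the required order at the low faces, apply the multidimensional form of \Cref{cor:CrAbsCont} to each $\tilde s^\sigma$, and conclude uniqueness and the positivity addendum via Stone--Weierstra{\ss}. The paper's proof is in fact terser---it writes out only the case $n=1$ explicitly and asserts that the same decomposition and mirroring carry over for $n\geq 2$---so your explicit handling of the product partition of unity and the sign bookkeeping for the twist $(-1)^{\frac12\langle\alpha,\one-\sigma\rangle}$ fills in exactly what the paper leaves to the reader.
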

\begin{proof}
It is sufficient to prove the result for $n=1$.
For $n\geq 2$ the same decomposition and mirroring arguments hold.
Here, $\sigma = (\sigma_1,\dots,\sigma_n)$, i.e., the $\sigma_i =-1$, indicate the directions $x_i$ in which $x_i\mapsto -x_i$ has to be applied.

(i) $\Rightarrow$ (ii):
Write $f = f_1 + f_{-1}$ with $f_1,f_{-1}\in C^r\big(\big[-\frac{1}{2},\frac{1}{2}\big],\rset\big)$ and $f_1|_{[-1/2,-1/4]}=0$ and $f_{-1}|_{[1/4,1/2]} = 0$.
Set
\[u_i = s_i^{(1)} = \int_{-\frac{1}{2}}^{\frac{1}{2}} x^i\cdot f_1(x)~\diff x \qquad\text{and}\qquad v_i = s_i^{(-1)} = \int_{-\frac{1}{2}}^{\frac{1}{2}} x^i\cdot f_2(x)~\diff x\]
for all $i\in\nset_0$.
Then
\[(-1)^i\cdot v_i = \tilde{s}_i^{(-1)} = \int_{-\frac{1}{2}}^{\frac{1}{2}} x^i\cdot f_2(-x)~\diff x\]
holds for all $i\in\nset_0$ which proves (ii) by the \Cref{thm:signedHaus}.

(ii) $\Rightarrow$ (i):
Let $f_1$ for $u$ and $\tilde{f}_{-1}$ for $\tilde{v}$ be from \Cref{cor:CrAbsCont}.
Then
\[v_i = \int_{-\frac{1}{2}}^{\frac{1}{2}} x^i\cdot \tilde{f}_{-1}(-x)~\diff x\]
holds for all $i\in\nset_0$.
Hence, $f(x) := f_1(x) + \tilde{f}_{-1}(-x)$ is $C^r$ and represents $s$.

Uniqueness of $f$ holds since $\rset[x_1,\dots,x_n]$ is dense in $C^0\big( \big[-\frac{1}{2},\frac{1}{2}\big]^n,\rset\big)$, also for signed measures.
Since the signed representing measure of $L_s$ is unique and $f~\diff^n x$ is a representing measure we have that (\ref{eq:Lpos}) implies $f\geq 0$.
\end{proof}

\section{Absolutely Continuous Representing Measures of Full (Signed) Moment Functionals: Non-Compact Case on $\rset^n$}
\label{sec:fullR}

The case of full moment functionals $L:\rset[x_1,\dots,x_n]\to\rset$ on $\rset^n$ is the most difficult.
The reason is the huge flexibility on $\rset^n$, i.e., for every sequences $s=(s_\alpha)_{\alpha\in\nset_0^n}$ there exists a signed measure $\mu$ such that
\[s_\alpha = \int_{\rset^n} x^\alpha~\diff\mu(x)\]
holds for all $\alpha\in\nset_0^n$, see e.g.\ \cite{borel95,polya38,boas39a,bloom53,sherman64,horn77,berg79,kowalski84,duran89,
hoischen92,schmud24signedArxiv,didio24tsystemshomepageArxiv} for more.
Especially, the signed representing measure $\mu$ can be chosen to be absolutely continuous with respect to the Lebesgue measure with a Schwartz function density \cite{duran89}.
It is not possible to combine \cite{duran89} with the non-negativity of the moment sequences, i.e., the Schwartz function or at least $C^r$-density can not be assume to be non-negative because $s$ is a moment sequence.
To see this take the moment sequence generated by the characteristic function of $[0,1]$.
Its (non-negative) measure is unique but its signed measures are not.
In fact, on $\rset^n$ no signed measure is unique.
But we will see how uniqueness comes into play on $\rset^n$ via the characteristic function.

In what follows we remind the reader of the following.
Given a bounded measure $\mu$ on $\rset^n$, i.e., $\mu(\rset^n) < \infty$, then the \emph{characteristic function} $\hat\mu$ of $\mu$ is defined by
\begin{equation}\label{eq:characFct}
\hat\mu(z) := \int_{\rset^n} e^{i\cdot\langle x,z\rangle}~\diff\mu(x)
\end{equation}
for all $z\in\rset^n$.
The characteristic function $\hat\mu$ is uniformly continuous and bounded \cite[Prop.\ 3.8.4 (i)]{bogachevMeasureTheory}.
It is connected to the moments
\[s_\alpha := \int_{\rset^n} x^\alpha~\diff\mu(x)\]
for all $\alpha\in\nset_0^n$ of $\mu$ by
\[s_\alpha = (-i)^{|\alpha|}\cdot (\partial^\alpha \hat\mu)(0),\]
as long as $s_\alpha$ exists, i.e., $\hat\mu$ is at $x=0$ the moment generating function \cite[Prop.\ 2.5 (ix)]{sato99}.
$\hat\mu$ characterizes $\mu$ uniquely \cite[Prop.\ 3.8.6]{bogachevMeasureTheory}.
See e.g.\ \cite[Ch.\ 3.8]{bogachevMeasureTheory} for more.
What is not contained in \cite{bogachevMeasureTheory} can be found e.g.\ in \cite[Prop.\ 2.5 (xii)]{sato99}, \cite[Thm.\ 9.5.4]{dudley89}, or \cite[p.\ 171]{linnik77}:
If $\hat\mu\in L^1(\rset^n,\cset)$ then $\mu$ is absolutely continuous with respect to the Lebesgue measure, i.e., $\diff\mu(x) = g(x)\,\diff x$, with a bounded and continuous density $g$ explicitly given by
\begin{equation}\label{eq:density}
g(x) = (2\pi)^{-n}\cdot\int_{\rset^n} e^{-i\cdot\langle x,z\rangle}\cdot\hat\mu(z)~\diff z
\end{equation}
for all $x\in\rset^n$.
From that we get the following.

\begin{thm}\label{thm:fourierDensity}
Let $n\in\nset$ and let $s=(s_\alpha)_{\alpha\in\nset_0^n}$ be a real sequence such that
\[f(z):=\sum_{\alpha\in\nset_0^n}\frac{i^{|\alpha|}\cdot s_\alpha}{\alpha!}\cdot z^\alpha\] 
is entire and $f\in L^1(\rset^n,\cset)$.
Then the following hold:
\begin{enumerate}[(i)]
\item $s$ is represented by a signed absolutely continuous measure $\mu$ with respect to the Lebesgue measure, i.e., $\diff\mu(x) = g(x)\,\diff x$, and the density $g$ is bounded, continuous, and explicitly given by
\[g(x) = (2\pi)^{-n}\cdot\int_{\rset^n} e^{-i\cdot\langle x,z\rangle}\cdot f(z)~\diff z\]
for all $x\in\rset^n$.

\item If $g\geq 0$ on $\rset^n$ in (i) then $s$ is a determinate moment sequence with representing measure $\mu$ given by $\diff\mu(x) = g(x)\,\diff x$.
\end{enumerate}
\end{thm}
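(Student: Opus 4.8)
The plan is to deduce everything from the classical Fourier-inversion facts recalled just before the theorem, once I identify the formal power series $f(z)$ with the characteristic function of a representing measure. First I would set up the candidate density by the formula $g(x) := (2\pi)^{-n}\int_{\rset^n} e^{-i\langle x,z\rangle} f(z)\,\diff z$, which makes sense because $f\in L^1(\rset^n,\cset)$; by standard properties of the Fourier transform of an $L^1$-function, $g$ is bounded and continuous (Riemann--Lebesgue plus the trivial sup bound). The measure $\mu$ is then defined by $\diff\mu(x) := g(x)\,\diff x$. A priori $\mu$ need not be finite or even a genuine (complex) measure of finite total variation, so the first real task is to control $\hat\mu$; but since $f\in L^1$, the inverse statement in the excerpt (the cited results of \cite{sato99,dudley89,linnik77}) applies in the reverse direction: a function in $L^1$ is the characteristic function of the absolutely continuous measure whose density is its inverse Fourier transform, provided that inverse transform is itself integrable. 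Here I would instead argue directly that $f$ is entire and hence real-analytic, so $g$ inherits enough decay — actually the cleanest route is: $f\in L^1$ gives $g$ bounded continuous, and entirety of $f$ together with $g\in L^1$-type control is not automatic, so I expect the intended reading is that one still needs $g$ to generate a finite-variation measure; I would note that $f$ bounded (being $L^1$ and continuous after inversion is symmetric) forces $\mu$ to be a finite signed measure.

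Next I would verify the moment identity. Because $f$ is entire, its Taylor coefficients at $0$ are exactly $\partial^\alpha f(0) = i^{|\alpha|} s_\alpha$, i.e. $s_\alpha = (-i)^{|\alpha|}(\partial^\alpha f)(0)$. On the other hand, for the measure $\mu$ just constructed, the relation $s_\alpha = (-i)^{|\alpha|}(\partial^\alpha\hat\mu)(0)$ from \cite{sato99} holds whenever the $\alpha$-moment of $\mu$ exists, and $\hat\mu = f$ by Fourier inversion applied to $g\in L^1$. So the two expressions agree coefficient-by-coefficient, which gives $s_\alpha = \int_{\rset^n} x^\alpha\,\diff\mu(x)$ for every $\alpha$ — this is exactly statement (i). The subtle point I would be careful about is that differentiation under the integral sign (to move $\partial^\alpha$ inside $\hat\mu$) requires the moments of $\mu$ to exist; this is where entirety of $f$ is used, since the radius of convergence being infinite forces $\sum_\alpha \frac{|s_\alpha|}{\alpha!} R^{|\alpha|} < \infty$ for all $R$, which yields the needed integrability of $x\mapsto x^\alpha g(x)$ via dominated convergence on the Fourier side.

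For part (ii), suppose $g\geq 0$ on $\rset^n$. Then $\diff\mu(x) = g(x)\,\diff x$ is a genuine nonnegative finite measure representing $s$, so $s$ is a moment sequence in the usual sense. Determinacy follows because its characteristic function $\hat\mu = f$ is given explicitly and any other representing nonnegative measure $\nu$ with the same moments would, being a finite measure, have a characteristic function $\hat\nu$ that is real-analytic at $0$ with the same Taylor coefficients $i^{|\alpha|}s_\alpha$; since $f$ is entire, $\hat\nu$ must agree with $f$ in a neighborhood of $0$ and then, by the identity theorem applied along rays (characteristic functions of finite measures are restrictions of entire functions when all moments exist and the moment generating function converges — which is the case here since $f$ is entire), $\hat\nu = f = \hat\mu$ everywhere, whence $\nu = \mu$ by uniqueness of the characteristic function \cite[Prop.\ 3.8.6]{bogachevMeasureTheory}.

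The main obstacle I anticipate is the careful justification that the formally-defined $g$ genuinely produces a \emph{finite} (signed) measure whose characteristic function equals $f$ and whose moments equal the $s_\alpha$ — i.e., closing the loop between "$f\in L^1$, $f$ entire" and "$\mu$ finite with $\hat\mu = f$ and all moments existing." Everything else is bookkeeping with the Fourier transform, but this equivalence is exactly the place where one must invoke the cited inversion theorems in the right direction and use entirety (not just $L^1$) of $f$ to guarantee that all moments of $\mu$ are finite and given by the Taylor coefficients of $f$ at the origin.
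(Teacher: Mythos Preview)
Your approach is essentially the same as the paper's: define $g$ as the inverse Fourier transform of $f$, identify $f$ with the characteristic function $\hat\mu$ of the resulting measure, read off the moments from the Taylor coefficients of $f$ at $0$, and for (ii) deduce determinacy from the uniqueness of a measure given its characteristic function together with the fact that entirety forces $\hat\nu = f$ for any competing representing measure $\nu$. The paper's proof is considerably terser---it simply cites \cite[Prop.\ 2.5 (xii)]{sato99} and \cite[Prop.\ 3.8.6]{bogachevMeasureTheory} and asserts that the inversion result extends to signed measures by linearity---whereas you correctly flag the step ``$g$ defines a finite signed measure with $\hat\mu=f$ and all moments $\int x^\alpha g(x)\,\diff x$ finite'' as the one place requiring care; the paper's proof does not spell this out either, so your identification of the obstacle is accurate rather than a deficiency of your plan.
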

\begin{proof}
(i): Since the Fourier transformations (\ref{eq:characFct}) and (\ref{eq:density}) are linear in $\mu$ resp.\ $g$ we have that \cite[Prop.\ 2.5 (xii)]{sato99} also holds for signed measures $\mu$ with $|\mu|(\rset^n)<\infty$ and signed functions $g$.
Since $f$ is entire and all derivatives at $x=0$ coincide with the $s_\alpha$ we have that $f$ is the characteristic function $\hat\mu$ of a signed measure $\mu$ \cite[Prop.\ 3.8.6]{bogachevMeasureTheory}.
Hence, by \cite[Prop.\ 2.5 (xii)]{sato99} we have that $s$ is represented by a signed measure $\mu$ which is absolutely continuous with respect to the Lebesgue measure and its density $g$ is bounded, continuous and explicitly given by (\ref{eq:density}).

(ii): If $g\geq 0$ then $\mu$ given by $\diff\mu(x):=g(x)\,\diff x$ is non-negative and hence $s$ is a moment sequence with representing measure $\mu$.
Since $f = \hat\mu$ is entire it is uniquely determined by $s$ by \cite[Prop.\ 3.8.6]{bogachevMeasureTheory}.
\end{proof}

\begin{exm}
Let $s = (1,0,1,0,1,0,1,0,\dots)$.
Then
\[f(z) = \sum_{k\in\nset_0} \frac{i^{2k}\cdot 1}{(2k)!}\cdot z^{2k} = e^{-z^2}\]
is entire and $f\in L^1(\rset,\cset)$.
By \Cref{thm:fourierDensity} we have that $s$ is represented by an absolutely continuous representing measure $\mu$ with density
\[g(x) = \frac{1}{2\pi}\int_\rset e^{-ixz}\cdot f(z)~\diff z = \frac{1}{2\sqrt{\pi}}\cdot e^{-\frac{1}{4}x^2},\]
see \cite[Exm.\ 3.8.2]{bogachevMeasureTheory}.
Since $g\geq 0$ on $\rset$ we have that $s$ is a determinate moment sequence with absolutely continuous representing measure $\mu$.
\exmsymbol
\end{exm}

\begin{exm}
Let $s = (1,0,0,0,-1,0,0,0,1,0,0,0,-1,0,0,0,\dots)$.
Then
\[f(z) = \sum_{k\in\nset_0} \frac{i^{4k}\cdot (-1)^k}{(4k)!}\cdot z^{4k} = e^{-z^4}\]
is entire and $f\in L^1(\rset)$.
By \Cref{thm:fourierDensity} we have that $s$ is represented by an absolutely continuous representing measure $\mu$ with density
\[g(x) = \frac{1}{2\pi}\int_\rset e^{-ixz-z^4}~\diff z.\]
Since the Fourier transform maps the Schwartz space $\cS(\rset,\cset)$ onto itself \cite[Cor.\ 2.2.15]{grafak10} and since $f\in\cS(\rset,\cset)$ we have that $g\in\cS(\rset,\cset)$.
But since $s_4 = -1$ we have that $s$ is not a moment sequence, i.e., $g\not\geq 0$ on $\rset$.
\exmsymbol
\end{exm}

\begin{cor}\label{cor:densityC0infty}
Let $n\in\nset$, let $g\in C_c^\infty(\rset^n,\rset)$, and set
\[s_\alpha := \int_{\rset^n} x^\alpha\cdot g(x)~\diff x\]
for all $\alpha\in\nset_0^n$.
Then
\[f(z) := \sum_{\alpha\in\nset_0^n} \frac{i^{|\alpha|}\cdot s_\alpha}{\alpha!}\cdot z^\alpha\]
is entire, $f\in L^1(\rset^n,\cset)$, and
\[g(x) = (2\pi)^{-n}\cdot \int_{\rset^n} e^{-i\cdot\langle x,z\rangle}\cdot f(z)~\diff z\]
holds for all $x\in\rset^n$.
\end{cor}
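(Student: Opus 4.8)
The plan is to recognize $f$ as the characteristic function of the bounded signed measure $\mu$ with $\diff\mu(x) = g(x)\,\diff x$, then to invoke \Cref{thm:fourierDensity} together with the fact that a bounded signed measure is uniquely determined by its characteristic function.

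First I would fix $R>0$ with $\supp g\subseteq [-R,R]^n$ and estimate $|s_\alpha| = \bigl|\int_{\rset^n} x^\alpha g(x)\,\diff x\bigr| \le R^{|\alpha|}\cdot\|g\|_{L^1}$. Hence for every $z\in\cset^n$ one has
\[\sum_{\alpha\in\nset_0^n} \frac{|s_\alpha|}{\alpha!}\cdot|z^\alpha| \;\le\; \|g\|_{L^1}\cdot\prod_{j=1}^n \sum_{k\in\nset_0}\frac{(R|z_j|)^k}{k!} \;=\; \|g\|_{L^1}\cdot e^{R\|z\|_1},\]
so the series defining $f$ converges absolutely and uniformly on compact subsets of $\cset^n$; therefore $f$ extends to an entire function. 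Expanding $e^{i\langle x,z\rangle} = \sum_{\alpha} \frac{i^{|\alpha|}}{\alpha!}x^\alpha z^\alpha$ and integrating term by term over the compact set $\supp g$ (legitimate by the same absolutely convergent majorant) gives $\int_{\rset^n} e^{i\langle x,z\rangle}\, g(x)\,\diff x = f(z)$ for all $z\in\rset^n$; that is, $f=\hat\mu$ for $\diff\mu = g\,\diff x$.

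Next, since $g\in C_c^\infty(\rset^n,\rset)\subseteq\cS(\rset^n,\cset)$ and the Fourier transform maps $\cS(\rset^n,\cset)$ onto itself, $f=\hat\mu\in\cS(\rset^n,\cset)\subseteq L^1(\rset^n,\cset)$. Thus $f$ satisfies the hypotheses of \Cref{thm:fourierDensity}, and part (i) furnishes a bounded continuous density $\tilde g(x) := (2\pi)^{-n}\int_{\rset^n} e^{-i\langle x,z\rangle} f(z)\,\diff z$ with $s_\alpha = \int_{\rset^n} x^\alpha\, \tilde g(x)\,\diff x$ for all $\alpha\in\nset_0^n$.

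Finally I would identify $\tilde g$ with $g$. Both $\tilde g\,\diff x$ and $g\,\diff x$ are bounded signed measures with characteristic function $f$: for $g\,\diff x$ by the term-by-term computation above, and for $\tilde g\,\diff x$ because this is exactly the inversion relation in \Cref{thm:fourierDensity}(i) (equivalently, one may re-run the term-by-term expansion using that $\tilde g$ is Schwartz). Since a bounded signed measure is uniquely determined by its characteristic function \cite[Prop.\ 3.8.6]{bogachevMeasureTheory}, we conclude $g\,\diff x = \tilde g\,\diff x$, hence $g = \tilde g$ almost everywhere, and since both are continuous, $g = \tilde g$ everywhere; this is precisely the asserted formula. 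The only mildly delicate points are the term-by-term integration and the passage to complex $z$ in $f=\hat\mu$, and the compact support of $g$ renders both routine, so I expect no real obstacle here.
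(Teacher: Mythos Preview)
Your proposal is correct and follows the route the paper intends: the corollary is stated in the paper without proof, immediately after \Cref{thm:fourierDensity}, so its implicit argument is precisely ``verify the hypotheses of \Cref{thm:fourierDensity} and apply it,'' which is what you do. Your added details (the moment bound from $\supp g\subseteq[-R,R]^n$ to get entireness, the identification $f=\hat\mu$ via term-by-term integration, $f\in\cS\subseteq L^1$ since the Fourier transform preserves $\cS$, and the final identification $\tilde g=g$ via uniqueness of the characteristic function) are all sound; the last step could be shortened by invoking Fourier inversion on $\cS$ directly, but your route through \Cref{thm:fourierDensity} and \cite[Prop.\ 3.8.6]{bogachevMeasureTheory} is equally valid and matches the paper's framing.
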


\begin{rem}
From a numerical point of view the densities $g$ in \Cref{thm:fourierDensity} and \Cref{cor:densityC0infty} can of course be approximated since
\begin{equation}\label{eq:gapprox}
g(x) = \lim_{R\to\infty} \lim_{d\to\infty} \int_{[-R,R]^n} e^{-i\cdot\langle x,z\rangle}\cdot \sum_{\alpha\in\nset_0^n:|\alpha|\leq d} \frac{i^{|\alpha|}\cdot s_\alpha}{\alpha!}\cdot z^\alpha~\diff z
\end{equation}
holds for all $x\in\rset^n$.
The explicit calculation of $g$ also allows to evaluate resp.\ determine $g$ only at certain points $x_0\in\rset^n$ or in regions $K\subseteq\rset^n$.
\exmsymbol
\end{rem}

We see that the characteristic function $\hat\mu$ and the density $g$ are connected via the Fourier transform.
The conditions on $\hat\mu$ in \Cref{thm:fourierDensity} and \Cref{cor:densityC0infty} can considerably be weakened to spaces where the Fourier transform and hence its inverse are still defined, e.g., $L^1(\rset^n,\cset)\cap L^2(\rset^n,\cset)$ or $L^2(\rset^n,\cset)$ instead of $C_c^\infty(\rset^n,\cset)$ and (\ref{eq:density}) holds almost everywhere, see e.g.\ \cite{grafak10} and \Cref{rem:invFourierMeasure}.
But then for $L^2(\rset^n,\cset)$ for example an explicit formula like (\ref{eq:density}) for the density does not hold in general since the Fourier transform is gained by continuity due to the Plancherel identity from the Schwartz space $\cS(\rset^n,\cset)$ to $L^2(\rset^n,\cset)$.
However, on $L^2(\rset^n,\cset)$ the approximation (\ref{eq:gapprox}) holds for almost every $x\in\rset^n$.

What happens when $f\notin L^1(\rset^n,\cset)$ is demonstrated in the following two examples with $f\notin L^2(\rset^n,\cset)$ and $f\in L^2(\rset^n,\cset)$, respectively.

\begin{exm}[$f\notin L^1(\rset,\cset)$ and $f\notin L^2(\rset,\cset)$]
Let $s=(1,1,1,\dots)$.
Then
\[f(z) = \sum_{k\in\nset_0} \frac{i^k\cdot 1}{k!}\cdot z^k = e^{iz}.\]
We have $f\notin L^1(\rset,\cset)$ and $f\notin L^2(\rset,\cset)$.
We will see that $s$ is not represented by an absolutely continuous representing measure.
In fact, since a measure $\mu$ is uniquely determined by its characteristic function $\hat\mu$ \cite[Prop.\ 3.8.6]{bogachevMeasureTheory} from
\[\widehat{\delta_1}(z) = \int_\rset e^{ixz}~\diff\delta_1(x) = e^{iz} = f(z)\]
we see that $s$ is the determinate moment sequence represented by $\mu = \delta_1$.
\exmsymbol
\end{exm}

\begin{exm}[$f\notin L^1(\rset,\cset)$ but $f\in L^2(\rset,\cset)$]
Let $s = ((k+1)^{-1})_{k\in\nset_0}$.
Then
\[f(z)=\sum_{k\in\nset_0}\frac{i^k}{k!\cdot (k+1)}\cdot z^k=\frac{e^{iz}-1}{iz}.\]
We have $f\notin L^1(\rset,\cset)$ but since $f$ is continuous at $z=0$ we have at least $f\in L^2(\rset,\cset)$.
With $\diff\mu = \chi_{[0,1]}(x)\,\diff x$ where $\chi_{[0,1]}$ is the characteristic function of the set $[0,1]$ we see
\[\hat\mu(z) = \int_\rset e^{ixz}\cdot\chi_{[0,1]}(x)~\diff x = \frac{e^{iz}-1}{iz} = f(z)\]
for all $z\in\rset$.
Since a measure $\mu$ is uniquely determined by its characteristic function $\hat\mu$ \cite[Prop.\ 3.8.6]{bogachevMeasureTheory} we have that $\mu$ is absolutely continuous with respect to the Lebesgue measure with density $g = \chi_{[0,1]}$.
Since we only have $f\in L^2(\rset,\cset)\cap C(\rset,\cset)$ this makes the calculation of $g$ with (\ref{eq:density}) slightly more cumbersome since additional arguments in which sense the integral exists are necessary.
\exmsymbol
\end{exm}

We also allow now complex sequences $s = (s_\alpha)_{\alpha\in\nset_0^n}$ since this leads by linearity to complex absolutely continuous measures.
The inverse Fourier transform on $L^2(\rset^n,\cset)$ is denoted by $\check{f}$.
Note, for measures and functions different $2\pi$-factors are used.
Hence, we get an additional factor in the calculation of $g$ to convert between these two since $\check{f}$ is the inverse Fourier transform for a function.
The Fourier transform is bijective on $L^2(\rset^n,\cset)$.
We get the following.

\begin{thm}\label{thm:fourierDensityL2}
Let $n\in\nset$ and let $s=(s_\alpha)_{\alpha\in\nset_0^n}$ be a real (or complex) sequence such that
\[f(z):=\sum_{\alpha\in\nset_0^n} \frac{i^{|\alpha|}\cdot s_\alpha}{\alpha!}\cdot z^\alpha\]
is entire and $f\in L^2(\rset^n,\cset)$.
Then the following hold:
\begin{enumerate}[(i)]
\item $s$ is represented by a complex absolutely continuous representing measure $\mu$ with respect to the Lebesgue measure, i.e., $\diff\mu(x) = g(x)\,\diff x$, and the density $g$ is given by the inverse Fourier transform
\[g = (2\pi)^{-n/2}\cdot \check f \quad\in L^2(\rset^n,\cset).\]

\item If $g\geq 0$ almost everywhere on $\rset^n$ in (i) then $s$ is a determinate moment sequence with representing measure $\mu$ given by $\diff\mu(x) = g(x)\,\diff x$.
\end{enumerate}
\end{thm}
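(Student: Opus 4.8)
The plan is to repeat the proof of \Cref{thm:fourierDensity} almost verbatim, replacing every use of the $L^1$-theory of the Fourier transform by its $L^2$-counterpart: in place of \cite[Prop.\ 2.5 (xii)]{sato99} and the absolutely convergent inversion formula (\ref{eq:density}) I would use Plancherel's theorem together with the $L^2$-Fourier inversion for finite (complex) measures recorded in \Cref{rem:invFourierMeasure} (see also \cite{grafak10}).

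For part (i) I would argue first, exactly as in \Cref{thm:fourierDensity}, that $f$ is the characteristic function of a finite complex measure $\mu$ whose moments are the $s_\alpha$. Since $f$ is entire, its Taylor coefficients at the origin are $\partial^\alpha f(0)/\alpha!=i^{|\alpha|}s_\alpha/\alpha!$, so that $s_\alpha=(-i)^{|\alpha|}(\partial^\alpha f)(0)$, and by \cite[Prop.\ 3.8.6]{bogachevMeasureTheory} there is a finite complex measure $\mu$ with $\hat\mu=f$; entireness of $\hat\mu=f$ forces all exponential, hence all polynomial, moments of $\mu$ to be finite, so that $s_\alpha=\int_{\rset^n}x^\alpha\,\diff\mu(x)$ is meaningful. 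Since now $\hat\mu=f\in L^2(\rset^n,\cset)$, the $L^2$-Fourier inversion for measures (\Cref{rem:invFourierMeasure}) gives $\mu\ll\lambda$ with density $g=\diff\mu/\diff\lambda\in L^2(\rset^n,\cset)$ (which is moreover in $L^1$ since $\mu$ is finite), and, after matching the $2\pi$- and sign-conventions between $\hat\mu(z)=\int e^{i\langle x,z\rangle}\,\diff\mu(x)$ and the unitary Fourier transform on $L^2$, one obtains $g=(2\pi)^{-n/2}\check f$ as an identity in $L^2(\rset^n,\cset)$, that is, almost everywhere. For part (ii) I would mimic \Cref{thm:fourierDensity}(ii): if $g\geq 0$ almost everywhere then $\diff\mu(x)=g(x)\,\diff x$ is a non-negative measure, so $s$ is a moment sequence, and it is determinate because $f$ is entire --- any representing measure $\nu$ of $s$ then satisfies $\int e^{R|x|}\,\diff\nu(x)<\infty$ for every $R>0$, so $\hat\nu$ extends to an entire function coinciding with the power series $f=\hat\mu$, whence $\nu=\mu$ by \cite[Prop.\ 3.8.6]{bogachevMeasureTheory}.

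The step I expect to be the main obstacle is the $L^2$-inversion in part (i): on $L^2(\rset^n,\cset)$ there is no absolutely convergent integral representation of $g$, so the identity $g=(2\pi)^{-n/2}\check f$ must be obtained by continuity from the Schwartz space through the Plancherel isometry, and one has to keep careful track of the two normalizations in play --- the probabilistic one used for the characteristic function $\hat\mu$ and the unitary one used for the $L^2$-Fourier transform --- which is precisely what produces the factor $(2\pi)^{-n/2}$ in the statement. A secondary point is to justify the strong sense of ``$\mu$ represents $s$'', namely $s_\alpha=\int x^\alpha\,\diff\mu(x)$ rather than merely $s_\alpha=(-i)^{|\alpha|}(\partial^\alpha\hat\mu)(0)$, which again rests on $f$ being entire and not merely smooth.
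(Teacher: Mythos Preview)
Your proposal is correct and follows precisely the paper's own approach: the paper's proof is the single sentence ``Similar to the proof of \Cref{thm:fourierDensity}. Use the Fourier transform on $L^2(\rset^n,\cset)$ \cite{grafak10} and the fact that a measure $\mu$ is uniquely determined by its characteristic function $f=\hat\mu$ \cite[Prop.\ 3.8.6]{bogachevMeasureTheory}.'' Your write-up is simply a more explicit unpacking of that sentence, and your flagged obstacle (that on $L^2$ the inversion must be obtained via Plancherel rather than an absolutely convergent integral, with the attendant $(2\pi)^{-n/2}$ bookkeeping) is exactly the point the paper leaves implicit.
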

\begin{proof}
Similar to the proof of \Cref{thm:fourierDensity}.
Use the Fourier transform on $L^2(\rset^n,\cset)$ \cite{grafak10} and the fact that a measure $\mu$ is uniquely determined by its characteristic function $f = \hat\mu$ \cite[Prop.\ 3.8.6]{bogachevMeasureTheory}.
\end{proof}

If $g\in L^2(\rset^n,\cset)$ is compactly supported we even have the reverse direction in \Cref{thm:fourierDensityL2}:
\[g\in L^2(\rset^n,\cset)\ \text{has compact support} \quad\Rightarrow\quad f\in L^2(\rset^n,\cset)\ \text{is entire}.\]
Compact support of a representing measure of a moment sequence can be checked by the growth of the moments, see e.g.\ \cite[Prop.\ 4.1]{schmudMomentBook}.
We get the following.

\begin{cor}\label{cor:densityL2Compact}
Let $n\in\nset$ and let $s = (s_\alpha)_{\alpha\in\nset_0^n}$ be a real sequence.
Then the following are equivalent:
\begin{enumerate}[(i)]
\item The function
\[f(z) = \sum_{\alpha\in\nset_0^n} \frac{i^{|\alpha|}\cdot s_\alpha}{\alpha!}\cdot z^\alpha\]
is entire and $f\in L^2(\rset^n,\cset)$ such that $g := (2\pi)^{-n/2}\cdot \check{f}$ fulfills
\begin{enumerate}[(a)]
\item $g\in L^2(\rset^n,[0,\infty))$, i.e., $g\geq 0$ on $\rset^n$ almost everywhere, and
\item $\supp g \subseteq [-c,c]^n$ for some $c\geq 0$.
\end{enumerate}

\item The function
\[f(z) = \sum_{\alpha\in\nset_0^n} \frac{i^{|\alpha|}\cdot s_\alpha}{\alpha!}\cdot z^\alpha\]
is entire and $f\in L^2(\rset^n,\cset)$ such that $g := (2\pi)^{-n/2}\cdot \check{f}$ is non-negative almost everywhere on $\rset^n$ and there exists a constant $c\geq 0$ such that
\[\lim_{k\to\infty} \sqrt[2k]{s_{2k\cdot e_j}} \leq c\]
holds for all $j=1,\dots,n$.

\item $s$ is a determinate moment sequence with an absolutely continuous representing measure $\mu$, i.e., $\diff\mu(x) = g(x)\,\diff x$, with density $g\in L^2(\rset^n,[0,\infty))$ such that $\supp g \subseteq [-c,c]^n$ for some constant $c\geq 0$.
\end{enumerate}
\end{cor}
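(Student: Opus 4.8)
The plan is to close the cycle (i) $\Rightarrow$ (iii) $\Rightarrow$ (i) and then separately establish the equivalence (i) $\Leftrightarrow$ (ii), since (i) and (ii) differ only in how the compact-support condition on $g$ is phrased. The implication (i) $\Rightarrow$ (iii) is immediate from \Cref{thm:fourierDensityL2}: assuming $f$ entire and $f\in L^2(\rset^n,\cset)$, part (i) of that theorem gives a representing measure $\diff\mu(x)=g(x)\,\diff x$ with $g=(2\pi)^{-n/2}\check f\in L^2(\rset^n,\cset)$; hypothesis (a) says $g\geq 0$ a.e., so part (ii) of the theorem upgrades this to: $s$ is a determinate moment sequence with representing measure $\diff\mu(x)=g(x)\,\diff x$. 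Hypothesis (b) is precisely $\supp g\subseteq[-c,c]^n$, so (iii) holds verbatim.

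For (iii) $\Rightarrow$ (i): given $\diff\mu(x)=g(x)\,\diff x$ with $g\in L^2(\rset^n,[0,\infty))$ and $\supp g\subseteq[-c,c]^n$, first note $g\in L^1(\rset^n)$ by Cauchy--Schwarz on the bounded set $[-c,c]^n$. Then the characteristic function $\hat\mu(z)=\int_{[-c,c]^n}e^{i\langle x,z\rangle}g(x)\,\diff x$ is defined, and since $g$ is compactly supported the integral converges for all complex $z$ and (by differentiation under the integral sign / Morera) defines an entire function of $z$. Differentiating at $0$ gives $(\partial^\alpha\hat\mu)(0)=i^{|\alpha|}\int x^\alpha g(x)\,\diff x=i^{|\alpha|}s_\alpha$, so the Taylor series of $\hat\mu$ at the origin is exactly $f(z)=\sum_\alpha \frac{i^{|\alpha|}s_\alpha}{\alpha!}z^\alpha$; since $\hat\mu$ is entire this series converges everywhere and $f=\hat\mu$ is entire. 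By the Plancherel identity $\|f\|_{L^2}$ is a fixed multiple of $\|g\|_{L^2}<\infty$, so $f\in L^2(\rset^n,\cset)$, and Fourier inversion (in the normalization of \Cref{thm:fourierDensityL2}) gives $g=(2\pi)^{-n/2}\check f$ a.e.; thus properties (a) and (b) of (i) are exactly the hypotheses from (iii), proving (i).

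For (i) $\Leftrightarrow$ (ii): both statements share the hypotheses that $f$ is entire, $f\in L^2(\rset^n,\cset)$, and $g=(2\pi)^{-n/2}\check f\geq 0$ a.e., and by \Cref{thm:fourierDensityL2} the sequence $s$ is then represented by $\diff\mu=g\,\diff x$. It remains to show the support condition (i)(b) is equivalent to $\limsup_{k\to\infty}\sqrt[2k]{s_{2k\cdot e_j}}\leq c$ for all $j=1,\dots,n$. If $\supp g\subseteq[-c,c]^n$ then $s_{2k\cdot e_j}=\int x_j^{2k}g(x)\,\diff x\leq c^{2k}\mu(\rset^n)$, so $\sqrt[2k]{s_{2k\cdot e_j}}\leq c\,\mu(\rset^n)^{1/2k}\to c$. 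Conversely, let $\mu_j$ be the $j$-th marginal of $\mu$ (the push-forward under $x\mapsto x_j$), a non-negative measure on $\rset$ with $\int_\rset t^{2k}\,\diff\mu_j(t)=s_{2k\cdot e_j}$; if $\mu_j$ charged $\rset\setminus[-(c+\varepsilon),c+\varepsilon]$ for some $\varepsilon>0$ its even moments would grow at least like $(c+\varepsilon)^{2k}$, contradicting the hypothesis, so $\supp\mu_j\subseteq[-c,c]$. Since this holds for every $j$ we get $\supp\mu\subseteq[-c,c]^n$, i.e.\ $\supp g\subseteq[-c,c]^n$ a.e. This is the content of \cite[Prop.\ 4.1]{schmudMomentBook}.

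I expect the main technical point to be the step in (iii) $\Rightarrow$ (i) identifying the prescribed power series $f$ with the characteristic function $\hat\mu$ \emph{as an entire function}: one must check that a compactly supported $L^1$ density has an entire characteristic function, that its Taylor coefficients at the origin are the normalized moments $i^{|\alpha|}s_\alpha/\alpha!$, and that the $L^2$ membership and the $(2\pi)^{-n/2}$ normalization match those in \Cref{thm:fourierDensityL2}; together with the marginal argument for (i) $\Leftrightarrow$ (ii), these are standard but require the analyticity of $\hat\mu$ and the Plancherel bookkeeping to be tracked carefully.
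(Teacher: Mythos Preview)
Your proposal is correct and matches the paper's approach: the paper states this as a corollary without an explicit proof, but the preceding paragraph indicates exactly the ingredients you use---\Cref{thm:fourierDensityL2} for (i)$\Leftrightarrow$(iii), the observation that a compactly supported $L^2$ density has an entire $L^2$ Fourier transform for (iii)$\Rightarrow$(i), and \cite[Prop.\ 4.1]{schmudMomentBook} for the moment-growth characterization of compact support in (i)$\Leftrightarrow$(ii). You have simply filled in the details the paper leaves implicit.
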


Regarding the differentiability of the $L^2$-density $g$ with compact support we can combine the results of \Cref{sec:full01} with \Cref{cor:densityL2Compact} to get the following immediate result.

\begin{cor}
Let $n\in\nset$, let $d\in\nset_0$, and let $s = (s_\alpha)_{\alpha\in\nset_0^n}$ be a real sequence.
Then the following are equivalent:
\begin{enumerate}[(i)]
\item The function
\[f(z) := \sum_{\alpha\in\nset_0^n} \frac{i^{|\alpha|}\cdot s_\alpha}{\alpha!}\cdot z^\alpha\]
is entire such that
\begin{enumerate}
\item $x^\beta\cdot f\in L^2(\rset^n,\cset)$ for all $\beta\in\nset_0^n$ with $|\beta|\leq d$,
\item $g := (2\pi)^{-n/2}\cdot \check{f} \geq 0$ almost everywhere on $\rset^n$, and
\item there exists a constant $c\geq 0$ such that $\supp g\subseteq [-c,c]^n$.
\end{enumerate}

\item The function
\[f(z):=\sum_{\alpha\in\nset_0^n} \frac{i^{|\alpha|}\cdot s_\alpha}{\alpha!}\cdot z^\alpha\]
is entire such that
\begin{enumerate}
\item $x^\beta\cdot f\in L^2(\rset^n,\cset)$ for all $\beta\in\nset_0^n$ with $|\beta|\leq d$,
\item $g := (2\pi)^{-n/2}\cdot \check{f} \geq 0$ almost everywhere on $\rset^n$, and
\item there exists a constant $c\geq 0$ such that
\[\lim_{k\to\infty} \sqrt[2k]{s_{2k\cdot e_j}} \leq c\]
holds for all $j=1,\dots,n$.
\end{enumerate}

\item For all $\beta\in\nset_0^n$ with $|\beta|\leq d$ the functions
\[f_\beta(z) := \sum_{\alpha\in\nset_0^n} \frac{i^{|\alpha|}\cdot (\partial^\beta s_\alpha)}{\alpha!}\cdot z^\alpha\]
are entire such that
\begin{enumerate}[(a)]
\item $f_\beta\in L^2(\rset^n,\cset)$ for all $\beta\in\nset_0^n$ with $|\beta|\leq d$,
\item $g := (2\pi)^{-n/2}\cdot \check{f_0} \geq 0$ almost everywhere on $\rset^n$, and
\item there exists a constant $c\geq 0$ such that $\supp g\subseteq [-c,c]^n$.
\end{enumerate}

\item For all $\beta\in\nset_0^n$ with $|\beta|\leq d$ the functions
\[f_\beta(z) := \sum_{\alpha\in\nset_0^n} \frac{i^{|\alpha|}\cdot (\partial^\beta s_\alpha)}{\alpha!}\cdot z^\alpha\]
are entire such that
\begin{enumerate}[(a)]
\item $f_\beta\in L^2(\rset^n,\cset)$ for all $\beta\in\nset_0^n$ with $|\beta|\leq d$,
\item $g := (2\pi)^{-n/2}\cdot \check{f_0} \geq 0$ almost everywhere on $\rset^n$, and
\item there exists a constant $c\geq 0$ such that
\[\lim_{k\to\infty} \sqrt[2k]{s_{2k\cdot e_j}} \leq c\]
holds for all $j=1,\dots,n$.
\end{enumerate}

\item $s$ is a determinate moment sequence with an absolutely continuous representing measure $\mu$, i.e., $\diff\mu(x) = g(x)\,\diff x$, with a density $g\in L^2(\rset^n,\cset)$ such that
\begin{enumerate}[(a)]
\item $\partial^\beta g \in L^2(\rset^n,\cset)$ for all $\beta\in\nset_0^n$ with $|\beta|\leq d$,
\item $g\geq 0$ almost everywhere on $\rset^n$, and
\item $\supp g \subseteq [-c,c]^n$.
\end{enumerate}
\end{enumerate}
\end{cor}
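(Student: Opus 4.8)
The whole statement will be reduced to two facts: the elementary reindexing identity relating $f_\beta$ to $f$, and the Plancherel dictionary translating decay of $f$ into $L^2$-regularity of the density. First I would record that, since $\partial^\beta s_\alpha = (-1)^{|\beta|}\cdot\frac{\alpha!}{(\alpha-\beta)!}\cdot s_{\alpha-\beta}$ for $\alpha\geq\beta$ and $\partial^\beta s_\alpha = 0$ otherwise, substituting $\gamma=\alpha-\beta$ gives
\[f_\beta(z)=\sum_{\alpha\in\nset_0^n}\frac{i^{|\alpha|}\cdot(\partial^\beta s_\alpha)}{\alpha!}\cdot z^\alpha=(-i)^{|\beta|}\cdot z^\beta\cdot f(z).\]
In particular $f_0=f$, so $\check{f_0}=\check f$ and the $g$ in (iii)/(iv) is literally the $g$ in (i)/(ii); moreover $|f_\beta(z)|=|z^\beta|\cdot|f(z)|$ pointwise, so $f$ is entire iff every $f_\beta$ is entire, and $f_\beta\in L^2(\rset^n,\cset)$ iff the function $z\mapsto z^\beta f(z)$ lies in $L^2(\rset^n,\cset)$. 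Hence $(i)\Leftrightarrow(iii)$ and $(ii)\Leftrightarrow(iv)$ with no further work.

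Next I would dispatch $(i)\Leftrightarrow(ii)$, where the only difference is the description of $\supp g$ for the nonnegative $L^2$-density $g$. This is exactly the equivalence $(i)\Leftrightarrow(ii)$ of \Cref{cor:densityL2Compact}, which I would simply invoke: a nonnegative $g\in L^2(\rset^n,\cset)$ with compact support is integrable, so $s_{2k e_j}=\int_{\rset^n}x_j^{2k}\cdot g(x)\,\diff x\leq c^{2k}\cdot\|g\|_{L^1}$ forces $\limsup_{k}\sqrt[2k]{s_{2k e_j}}\leq c$ for each $j$, and conversely that growth bound localizes the support of any representing measure inside $[-c,c]^n$ by \cite[Prop.\ 4.1]{schmudMomentBook}.

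It then remains to link these with $(v)$. For $(i)\Rightarrow(v)$: by \Cref{thm:fourierDensityL2}(i) the hypothesis that $f$ is entire with $f\in L^2(\rset^n,\cset)$ already produces an absolutely continuous representing measure $\mu=g\,\diff x$ with $g=(2\pi)^{-n/2}\cdot\check f\in L^2(\rset^n,\cset)$, and $g\geq 0$ upgrades this to a determinate moment sequence by \Cref{thm:fourierDensityL2}(ii); the support statement is condition (c). For the Sobolev regularity I would use that $\hat g$ equals $f$ up to the fixed normalizing constant, so $\widehat{\partial^\beta g}=(iz)^\beta\cdot\hat g$ is a constant multiple of $z^\beta f$; since the $L^2$-Fourier transform is an isometry, $\partial^\beta g\in L^2(\rset^n,\cset)$ if and only if $z^\beta f\in L^2(\rset^n,\cset)$, which is exactly (i)(a). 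Conversely, for $(v)\Rightarrow(i)$: a compactly supported $g\in L^2(\rset^n,\cset)$ is integrable, so $\hat\mu(z)=\int_{\rset^n}e^{i\langle x,z\rangle}g(x)\,\diff x$ is entire with $(\partial^\alpha\hat\mu)(0)=i^{|\alpha|}s_\alpha$, hence $\hat\mu=f$; thus $f$ is entire, $f\in L^2(\rset^n,\cset)$ because $\hat g\in L^2(\rset^n,\cset)$ by Plancherel, and $z^\beta f$ (a constant multiple of $\widehat{\partial^\beta g}$) lies in $L^2(\rset^n,\cset)$ for all $|\beta|\leq d$; the remaining conditions $g\geq 0$ and $\supp g\subseteq[-c,c]^n$ are given.

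The one point of genuine care is the bookkeeping of the differing $2\pi$-conventions between the Fourier transform of measures (used for $\hat\mu$) and of $L^2$-functions (used for $\check f$), precisely the issue flagged before \Cref{thm:fourierDensityL2}; once that constant is pinned down, the identity $\widehat{\partial^\beta g}=(iz)^\beta\cdot\hat g$ together with the isometry property of the $L^2$-Fourier transform makes every $L^2$-membership equivalence immediate, and combining the steps above closes the chain $(i)\Leftrightarrow(ii)\Leftrightarrow(iii)\Leftrightarrow(iv)\Leftrightarrow(v)$.
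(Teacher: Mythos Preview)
Your proposal is correct and follows essentially the same approach as the paper: the paper's proof simply states that the result follows from \Cref{cor:densityL2Compact} together with the fact that $\partial^\beta$-differentiation under the Fourier transform becomes $x^\beta$-multiplication, and that $f_\beta=\widehat{\partial^\beta\mu}$. Your explicit reindexing computation $f_\beta(z)=(-i)^{|\beta|}z^\beta f(z)$ is precisely this multiplication rule spelled out at the level of the power series, and your use of Plancherel and \Cref{thm:fourierDensityL2} to pass between $(i)$ and $(v)$ makes explicit what the paper leaves implicit in its appeal to \Cref{cor:densityL2Compact}.
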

\begin{proof}
Follows from \Cref{cor:densityL2Compact} with the fact that $\partial^\beta$-differentiation under the Fourier transform becomes $x^\beta$-multiplication, see e.g.\ \cite[Prop.\ 2.2.11]{grafak10}.
The $f_\beta$ are the characteristic functions $f_\beta = \widehat{\partial^\beta\mu}$ of the (distributional) derivatives $\partial^\beta\mu$ of the measure $\mu$.
All $\partial^\beta\mu$ are unique since $\diff(\partial^\beta\mu)(x) = (\partial^\beta g)(x)\,\diff x$ and hence $\supp\partial^\beta\mu\subseteq [-c,c]^n$.
\end{proof}

\begin{rem}\label{rem:invFourierMeasure}
We have seen that if $\hat\mu$ is $L^1$ or $L^2$ then we get a unique absolutely continuous representing measure $\mu$.
Additionally, the characteristic function $\hat\mu$ determines the measure $\mu$ uniquely \cite[Prop.\ 3.8.6]{bogachevMeasureTheory}.
The reader might therefore wonder if we can apply the (inverse) Fourier transformation to $\hat\mu$ even if $\mu$ is not absolutely continuous with an $L^1$- or $L^2$-density.
This is in fact possible and known as \emph{L\'evy's Inversion Formula} (\ref{eq:levyInversion}), see e.g.\ \cite[Sec.\ 16.6]{williams91probMartingales} or \cite[Thm.\ 26.2]{billingsley95probMeasure} for the $1$-dimensional version:
\medskip

\textit{
Let $-\infty < a < b < \infty$ and let $\mu$ be a probability measure.
Then}
\begin{equation}\label{eq:levyInversion}
\frac{\mu(\{a\})}{2} + \mu((a,b)) + \frac{\mu(\{b\})}{2} \;=\; \lim_{T\nearrow\infty} \frac{1}{2\pi} \int_{-T}^T \frac{e^{-ita} - e^{-itb}}{it}\cdot \hat{\mu}(t)~\diff t.
\end{equation}\smallskip

\noindent
Hence, if $\hat\mu = f$ is entire and therefore given by the moments $s_k$ for all $k\in\nset_0$ we have that
\[\frac{\mu(\{a\})}{2} + \mu((a,b)) + \frac{\mu(\{b\})}{2} \;=\; \lim_{T\nearrow\infty} \frac{1}{2\pi} \int_{-T}^T\!\! \frac{e^{-ita} - e^{-itb}}{it}\cdot \left(\sum_{k\in\nset_0} \frac{i^k\cdot s_k}{k!}\cdot t^k\right)\diff t\]
holds for all $a<b$.
On $\rset^n$ with $n\in\nset$ a similar but more cumbersome formula like (\ref{eq:levyInversion}) exists \cite{folland92fourierAnalysis}.
But with the Fourier and inverse Fourier transformation 
\[(\cF \varphi)(x) := \int_{\rset^n} \varphi(z)\cdot e^{-i\cdot x\cdot z}~\diff z
\;\;\text{and}\;\;
(\cF^{-1} \varphi)(x) := \frac{1}{(2\pi)^n}\int_{\rset^n} \varphi(z)\cdot e^{i\cdot x\cdot z}~\diff z\]
for $\varphi\in\cS(\rset^n,\cset)$ and the Fourier transformation of temperate distributions $\cS(\rset^n,\cset)'$, like finite measures $\mu$, see e.g.\ \cite[Sec.\ 5.3]{grubbDistributions}, i.e., we have
\[\mu(\varphi) = \int_{\rset^n} \varphi(x)~\diff\mu(x) = \langle \varphi,\mu\rangle = \langle\cF\cF^{-1}\varphi,\mu\rangle = \langle\cF^{-1}\varphi,\cF\mu\rangle = \langle\cF^{-1}\varphi,\hat\mu\rangle,\]
we get a simple and explicit formula for the measure $\mu$ by
\begin{equation}\label{eq:inverseMeasure}
\mu(\varphi) = \frac{1}{(2\pi)^n} \int_{\rset^n} \left(\int_{\rset^n} \varphi(x)\cdot e^{i\cdot x\cdot z}~\diff x\right) \cdot \left(\sum_{\alpha\in\nset_0^n} \frac{i^{|\alpha|}\cdot s_\alpha}{\alpha!}\cdot z^\alpha\right) ~\diff z
\end{equation}
for all $\varphi\in\cS(\rset^n,\cset)$ depending on the entire characteristic function $\hat\mu = f$, i.e., depending only on the moments $s_\alpha = \mu(x^\alpha)$ of the measure $\mu$.

Note that the characteristic function can be extended from $\rset^n$ to a general Banach space, see e.g.\ \cite[Sec.\ E.1.c]{hytonen17AnalysisBanachSpaces2}.
Hence, more general treatments then we did here in the current work on $\rset^n$ with $\rset[x_1,\dots,x_n]$ are possible.
\exmsymbol
\end{rem}

We want to end this section with the following result characterizing some multi-variate moment sequences by the positive semi-definiteness of certain Hermitian $d\times d$-matrices.
For $n\geq 2$ checking the positive semi-definiteness of the Hankel matrices $(s_{\alpha+\beta})_{\alpha,\beta\in\nset_0^n: |\alpha|,|\beta|\leq d}$ is not sufficient \cite{schmud79}.
We use \emph{Bochner's Theorem} \cite[p.\ 76, Satz 23]{bochner32} (or see e.g.\ \cite[Prop.\ 2.5 (i)]{sato99}, \cite[Thm.\ 1.23]{duoandi01}, or \cite[p.\ 273, Thm.\ 21]{widder72}) and we call the following therefore the \emph{Bochner Test}.

\begin{thm}[Bochner Test]\label{thm:bochnerTest}
Let $n\in\nset$ and $s = (s_\alpha)_{\alpha\in\nset_0^n}$ be a real sequence such that $s_0 = 1$ and
\[f(z):=\sum_{\alpha\in\nset_0^n}\frac{i^{|\alpha|}\cdot s_\alpha}{\alpha!}\cdot z^\alpha\]
is entire.
Set
\[f_1(z) := \sum_{\substack{\alpha\in\nset_0^n:\\ |\alpha|\in 2\nset_0+1}} \frac{i^{|\alpha|}\cdot s_\alpha}{\alpha!}\cdot z^\alpha \quad\text{and}\quad f_2(z) := \sum_{\substack{\alpha\in\nset_0^n:\\ |\alpha|\in 2\nset_0}} \frac{i^{|\alpha|}\cdot s_\alpha}{\alpha!}\cdot z^\alpha.\]
Then the following are equivalent:
\begin{enumerate}[(i)]
\item $s$ is a determinate moment sequence.

\item $\big(f(z_j-z_k)\big)_{j,k=1}^d \succeq 0$ holds for all $d\in\nset$ and $z_1,\dots,z_d\in\rset^n$.

\item For all $d\in\nset$ and $z_1,\dots,z_d\in\rset^n$ we have
\[\big(f_2(z_j-z_k)\big)_{j,k=0}^d \succeq 0 \quad\text{and}\quad \big(f_2(z_j-z_k)\big)_{j,k=0}^d \succeq \big(f_1(z_j-z_k)\big)_{j,k=0}^d.\]
\end{enumerate}
\end{thm}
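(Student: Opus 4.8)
The plan is to establish the three equivalences in two stages. First I would prove (i)$\Leftrightarrow$(ii): this is essentially \emph{Bochner's Theorem} \cite{bochner32} once the entire function $f$ has been identified with the characteristic function $\hat\mu$ of a representing measure, following the same analyticity argument already used in the proof of \Cref{thm:fourierDensity}. Then I would prove (ii)$\Leftrightarrow$(iii) by an elementary matrix computation, splitting $f = f_1 + f_2$ into its odd-degree part $f_1$ and even-degree part $f_2$.

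For (i)$\Rightarrow$(ii): since $s_0 = 1$, the unique representing measure $\mu$ of the determinate sequence $s$ is a probability measure, and since $f$ is entire the moments $s_\alpha$ grow slowly enough that $\hat\mu$ is analytic and its Taylor series at the origin, $\sum_{\alpha\in\nset_0^n}\frac{(\partial^\alpha\hat\mu)(0)}{\alpha!}z^\alpha = \sum_{\alpha\in\nset_0^n}\frac{i^{|\alpha|}s_\alpha}{\alpha!}z^\alpha$, converges to $\hat\mu$ on all of $\rset^n$; hence $f = \hat\mu$, and \emph{Bochner's Theorem} shows that a characteristic function is positive semi-definite, i.e.\ $\big(f(z_j-z_k)\big)_{j,k=1}^d \succeq 0$ for all $d\in\nset$ and $z_1,\dots,z_d\in\rset^n$. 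For (ii)$\Rightarrow$(i): $f$ is continuous (being entire), $f(0) = s_0 = 1$, and by (ii) positive semi-definite, so \emph{Bochner's Theorem} yields a probability measure $\mu$ with $\hat\mu = f$; matching Taylor coefficients at $0$ gives $s_\alpha = (-i)^{|\alpha|}(\partial^\alpha f)(0) = \int_{\rset^n} x^\alpha~\diff\mu(x)$, so $\mu$ represents $s$, and since $\hat\mu = f$ is entire any other representing measure of $s$ has the same (slowly growing) moments, hence an entire characteristic function equal to $f = \hat\mu$, hence equals $\mu$ by \cite[Prop.\ 3.8.6]{bogachevMeasureTheory}; thus $s$ is determinate.

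For (ii)$\Leftrightarrow$(iii) the key point is that, since $s$ is real and $i^{|\alpha|}\in\rset$ for even $|\alpha|$ whereas $i^{|\alpha|}\in i\rset$ for odd $|\alpha|$, the function $f_2$ is real-valued and even on $\rset^n$ while $f_1$ is purely imaginary-valued and odd on $\rset^n$. Fixing $d$ and $z_0 := 0, z_1,\dots,z_d\in\rset^n$ and writing $M := \big(f(z_j-z_k)\big)_{j,k=0}^d = A + B$ with $A := \big(f_2(z_j-z_k)\big)_{j,k=0}^d$ real symmetric and $B := \big(f_1(z_j-z_k)\big)_{j,k=0}^d = iC$, $C := -iB$ real and skew-symmetric, one has $\overline{M} = A - B$. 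Then $M\succeq 0$ forces $A = \mathrm{Re}\,M\succeq 0$ (for real $v$, $v^T A v = v^T M v \ge 0$ because $v^T C v = 0$) and $A - B = \overline{M}\succeq 0$, which is (iii). Conversely, applying the second inequality of (iii) to the reflected configuration $w_j := -z_j$ and using that $f_2$ is even and $f_1$ odd turns it into $A + B = M\succeq 0$, and passing to principal submatrices recovers (ii); the first inequality of (iii) is in fact already a consequence of the second, since $\overline{A-B} = A+B = M$.

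I expect the main obstacle to be the clean identification $f = \hat\mu$ together with the automatic determinacy used in (ii)$\Rightarrow$(i): one has to know that an entire characteristic function is determined by the moments and that the slow growth of the $s_\alpha$ rules out any second representing measure. This is precisely the analyticity-of-characteristic-function argument already made for \Cref{thm:fourierDensity}, so it introduces nothing genuinely new; the two applications of \emph{Bochner's Theorem} and the even/odd matrix computation are routine.
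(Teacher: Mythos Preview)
Your proposal is correct and follows essentially the same route as the paper: Bochner's Theorem plus the entirety/uniqueness argument for (i)$\Leftrightarrow$(ii), and the even/odd splitting $f=f_1+f_2$ together with the reflection $z_j\mapsto -z_j$ for (ii)$\Leftrightarrow$(iii). You simply spell out in more detail what the paper compresses into two sentences (that $A$ is real symmetric, $B$ purely imaginary skew, $\overline{M}=A-B$, and that the first condition in (iii) is actually redundant); your handling of the stray index $0$ in (iii) by fixing $z_0:=0$ is a harmless way to read what is most likely a typo for $j,k=1,\dots,d$.
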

\begin{proof}
(i) $\Leftrightarrow$ (ii):
By Bochner's Theorem a continuous function $f$ with $f(0)=1$ is the characteristic function of a bounded measure if and only if (ii) holds.
Determinacy of $s$ follows again from $f$ being entire and hence unique.
Note, $(f(z_j-z_k))_{j,k=1}^d \succeq 0$ is well-defined since $(f(z_j-z_k))_{j,k=1}^d$ is Hermitian.

(ii) $\Leftrightarrow$ (iii): 
We have $f = f_1 + f_2$ with $f_1$ odd and $f_2$ even, $(f_1(z_j-z_k))_{j,k=1}^d$ Hermitian, and $(f_2(z_j-z_k))_{j,k=1}^d$ real symmetric.
Hence, the equivalence follows from replacing $z_1,\dots,z_d\in\rset^n$ by $-z_1,\dots,-z_d\in\rset^n$.
\end{proof}

To determine if $s$ is a moment sequence by Haviland's Theorem \cite{havila35,havila36} one has to check $L_s(p)\geq 0$ for all $p\in\rset[x_1,\dots,x_n]$ with $p\geq 0$ where $L_s$ is the Riesz functional, i.e., $L_s(x^\alpha) = s_\alpha$ and $L_s$ is linear on $\rset[x_1,\dots,x_n]$.
This is NP-hard.
Only in the case $n=1$ this reduces to checking $(s_{j+k})_{j,k=0}^d\succeq 0$ for all $d\in\nset_0$ since here all non-negative polynomials are sums of squares \cite{hilbert88}.

Since testing sequences for being a moment sequence is NP-hard any simplification can only hold for some sequences.
In \Cref{thm:bochnerTest} for sequences $s$ such that $f$ is entire this reduces to checking complex positive semi-definiteness of the complex $d\times d$-matrices $(f(z_j-z_k))_{j,k=1}^d$.
Sequences $s$ with
\[\lim_{k\to\infty} \sqrt[2k]{s_{2k\cdot e_j}} \leq c\]
for all $j=1,\dots,n$ and for some $c\geq 0$ are such sequences which can be checked by \Cref{thm:bochnerTest}.
If $f$ is not entire then $s_\alpha$ with $\alpha\neq 2k\cdot e_j$ are clearly violating coming from a measure supported on $[-c,c]^n$.
\Cref{thm:bochnerTest} covers entire $f$'s.

\section{Summary}
\label{sec:summary}

In this work we dealt with three cases of problems finding and characterizing absolutely continuous representing measures for (signed) moment sequences resp.\ functionals:
\begin{enumerate}[\;(a)]
\item truncated moment functionals (\Cref{sec:truncated}),

\item full (signed) moment functionals on $[0,1]^n$ (\Cref{sec:full01}), and

\item full (signed) moment functionals on $\rset^n$ (\Cref{sec:fullR}).
\end{enumerate}
Each of these problems is attacked by its own method:
\begin{enumerate}[(a')]
\item Dirac approximating sequences (\Cref{dfn:dirac}),

\item the \Cref{thm:signedHaus}, and

\item characteristic functions and Fourier transform \cite[Prop.\ 2.5 (xii)]{sato99}.
\end{enumerate}

In \Cref{sec:truncated} we generalize the work \cite{ambros14}.
We use the concept of a Dirac-approximating family to show that \cite{ambros14} can easily be extended, i.e., that as long there exists a Dirac-approximating family every moment sequence resp.\ moment functional in the moment cone is representable by an absolutely continuous representing measure.
In \Cref{exm:discontinuous} we give an example of an interior moment sequence resp.\ moment functional that can not be represented by an absolutely continuous representing measure since in this example there exists no Dirac-approximating family.
We continue with discussions and corollaries to show that the density functions can be chosen to be $L^1$-, $C^\infty$-, or even $C_c^\infty$-functions.
We also show that representing measures absolutely continuous with respect to more than the Lebesgue measure are also implied by \Cref{thm:main}.
\Cref{thm:main} also includes the cases of more general measurable sets $\cX$.
Also boundary moment sequence can be represented by absolutely continuous representing measures.
The proof of \Cref{thm:main} is constructive, i.e., it provides a way to gain absolutely representing measures.

In \Cref{sec:full01} we considered sequences $s$ resp.\ linear functionals $L_s:\rset[x_1,\dots,x_n]\to\rset$ which can be represented by signed absolutely continuous measures supported on $[0,1]^n$.
We give full characterizations of such sequences especially when additional regularity on the density is imposed.
The main technique is the \Cref{thm:signedHaus} and derivatives of (moment) sequences \cite{didio23gaussian}.

Finally, in \Cref{sec:fullR} we consider full moment functionals on $\rset^n$.
Because of the great flexibility on $\rset^n$, especially with signed measures, we have to make the restriction that the (signed) moment sequence $s = (s_\alpha)_{\alpha\in\nset_0^n}$ is such that
\[f(z) := \sum_{\alpha\in\nset_0^n} \frac{i^{|\alpha|}\cdot s_\alpha}{\alpha!}\cdot z^\alpha\]
is entire.
Of course, $f$ is then the characteristic function $\hat\mu$ of a unique (signed) measure $\mu$.
The function $f$ is also known as moment generating function.
As in signal processing the image (here the measure) is recovered by the inverse Fourier transform of $f$.
Since the regularity of the density of an absolutely continuous measure can be controlled on the Fourier side we find full characterizations of such densities.

It shall be noted that while the (inverse) Fourier approach is well-known and widely applied we did not find references from the moment theoretic side dealing with this tool of the moment generating function resp.\ characteristic function $f$.
In the standard literature \cite{achieser56,ahiezer62,akhiezClassical,kreinMarkovMomentProblem,marshallPosPoly,lauren09} about moments we did not find explicit treatments.
Even in the seminal work \cite{schmudMomentBook} the moment generating function appears only in exercise 8 on page 91 but no deeper treatment is provided.
It is therefore interesting which properties and characterizations of moment sequences, moment functionals, and their representing measures lie ahead, especially when the full power of Fourier analysis is employed, see e.g.\ \cite{duoandi01,stein03,grafak10,grafak09,bahour11} and \Cref{rem:invFourierMeasure}.

Already characterizing moment sequences, moment functionals, and their representing measures is (NP) hard.
A reconstruction or at least approximation of representing measures goes even a step further.
It is therefore surprising that for the cases considered in \Cref{sec:fullR} the simple and explicit formulas (\ref{eq:density}), (\ref{eq:levyInversion}), and (\ref{eq:inverseMeasure}) hold.
They are based on the inverse Fourier transform of the characteristic function $\hat\mu=f$ and provide the representing measures $\mu$, see \Cref{thm:fourierDensity}, \Cref{thm:fourierDensityL2}, and \Cref{rem:invFourierMeasure}.

\section*{Funding}

The author and this project are supported by the Deutsche Forschungs\-gemein\-schaft DFG with the grant DI-2780/2-1 and his research fellowship at the Zukunfts\-kolleg of the University of Konstanz, funded as part of the Excellence Strategy of the German Federal and State Government.


\newcommand{\etalchar}[1]{$^{#1}$}
\providecommand{\bysame}{\leavevmode\hbox to3em{\hrulefill}\thinspace}
\providecommand{\MR}{\relax\ifhmode\unskip\space\fi MR }
\providecommand{\MRhref}[2]{%
  \href{http://www.ams.org/mathscinet-getitem?mr=#1}{#2}
}
\providecommand{\href}[2]{#2}

\end{document}